\newtheorem{theorem}{Theorem}[section]
\newtheorem{lemma}[theorem]{Lemma}
\newtheorem{corollary}[theorem]{Corollary}
\newtheorem{proposition}[theorem]{Proposition}
\theoremstyle{definition}
\newtheorem{example}[theorem]{Example}  
\newtheorem{definition}[theorem]{Definition}   
\newcommand{\Fub}{\mathrm{FR}}
\newcommand{\fr}{\mathrm{fr}}
\newcommand{\Sym}{\mathfrak{S}}
\newcommand*{\NN}{\mathbb{N}}
\newcommand*{\x}{\mathbf{x}}
\newcommand{\FR}{\mathrm{\mathrm{FR}}}
\newcommand{\seqnum}[1]{\href{http://oeis.org/#1}{\underline{#1}}}
\newcommand{\NDFRn}{\mathrm{\mathrm{FR}_n^\uparrow}}
\newcommand{\NIFRn}{\mathrm{\mathrm{FR}_n^\downarrow}}
\newcommand{\flatn}{\textnormal{flat}}
\newtheorem{conjecture}[theorem]{Conjecture}
\tikzstyle{vertex}=[circle, draw, fill=black, inner sep=0pt, minimum size=1pt]
\title{Enumerating Flat Fubini Rankings}
\author[K.~Barrese]{Kenny Barrese}
\address[K.~Barrese]{Mathematics and Natural Science Division, Brescia University, Owensboro, KY, 42301
}
\email{\textcolor{blue}{\href{mailto:kenny.barrese@brescia.edu}{kenny.barrese@brescia.edu}}}
\author[Elder]{Jennifer Elder}
\address[J.~Elder]{Department of Computer Science, Mathematics and Physics, Missouri Western State University, St. Joseph, MO, 64507}
\email{\textcolor{blue}{\href{mailto:jelder@missouriwestern.edu}{jelder@missouriwestern.edu}}}
\author[Harris]{Pamela E. Harris}
\address[P.~E.~Harris]{Department of Mathematical Sciences, University of Wisconsin, Milwaukee, WI, 53211}
\email{\textcolor{blue}{\href{mailto:peharris@uwm.edu}{peharris@uwm.edu}}}
\author[Simpson]{Anthony Simpson}
\address[A.~Simpson]{Greenwood, IN 46143}
\email{\textcolor{blue}{\href{mailto:alsimpson1996@gmail.com}{alsimpson1996@gmail.com}}}
\newcommand{\run}{\texttt{run}}
\newcommand{\wrun}{\texttt{wrun}}
 \newcommand{\flatwruns}{\texttt{flat\textunderscore wruns}}
 \newcommand{\flatruns}{\texttt{flat\textunderscore runs}}
 \newcommand{\wflatruns}{\texttt{wflat\textunderscore runs}}
 \newcommand{\wflatwruns}{\texttt{wflat\textunderscore wruns}}
\begin{document}

\begin{abstract}
    Recall that the set of Fubini rankings on $n$ competitors consists of the $n$-tuples that encode the possible rankings of $n$ competitors in a competition allowing ties. 
    Moreover, recall that a run (weak run) in a tuple is a subsequence of consecutive ascents (weak ascents). 
    If the leading terms of the set of maximally long runs (weak runs) of a tuple are in increasing (weakly increasing) order, then the tuple is said to be flattened (weakly flattened).
    We define the set of strictly flattened Fubini rankings, which is the subset of Fubini rankings with runs of strict ascents whose leading term are strictly increasing. Analogously, we define the set of weakly flattened Fubini rankings, which is the subset of Fubini rankings with runs of weak ascents whose leading terms are in weakly increasing order. Our main results give formulas for the enumeration of strictly flattened Fubini rankings and weakly flattened Fubini rankings. We also provide some conjectures for further study.
\end{abstract}

\let\thefootnote\relax\footnotetext{\textit{2020 Mathematics Subject Classification}. Primary 05A05; Secondary 05A10, 05A15,  05A17, 05A18.}
\let\thefootnote\relax\footnotetext{\textit{Key words and phrases}. Fubini ranking, flattened word, flattened Fubini ranking, content of a word}

\maketitle

\section{Introduction}
Let $\NN$ denote the set of positive integers and let $[n]=\{1,2,\ldots, n\}$ whenever $n\in\NN$. 
Throughout we let $\Sym_n$ denote the set of permutations which we write in one-line notation $\pi=\pi_1\pi_2\cdots\pi_n$. 
We recall that a set partition $\beta$ of $[n]$ is a collection of subsets (also called blocks) $\beta_1,\beta_2,\ldots,\beta_k$ satisfying the conditions that 
$\cup_{i=1}^k\beta_i=\beta$ and $\beta_i\cap \beta_j=\emptyset$ for all $i\neq j$.
In \cite{callan}, Callan studied the Mathematica Flatten function, which we denote by  $\flatn$, whose input is an ordered set partition (where the order of the blocks matter) and returns a permutation by deleting the block notation and concatenating the numbers in a single row. 
For example, under the flatten operation, the set partition $\Pi=1|36|524|8|79$ will become
\[
\text{Flatten}(\Pi) = 136524879.
\]
One way to uniquely order the blocks of a set partition is to order blocks by their minimal elements, and then order the elements within a block in increasing order. Whenever this ordering is implemented, the flattened function returns a permutation whose runs of ascents have the property that the leading terms of those runs are in increasing order. Using the previous example, we would reorder the blocks and elements in the blocks as $\Pi=1|245|36|79|8$, which produces
\[
    \text{Flatten}(\Pi)=124536798.
\]
A permutation with this property is often referred to as a flattened permutation or flattened partition \cite{callan}. 
Callan considered the set of flattened permutations which also avoid certain patterns of length three, whose enumerations involve powers of 2, Fibonacci numbers, Catalan numbers, and binomial transforms of Catalan numbers.

Nabawanda, Rakotondrajao, Bamunoba \cite{ONFRAB} and Beyene, Mantaci \cite{Beyene}, independently gave formulas for the number of flattened permutations of length $n$ with $k$ runs. They showed multiple recursive identities, including
\begin{equation*}
f_{n,k} = \sum_{m=1}^{n-2} \left (\binom{n-1}{m}-1 \right)f_{m,k-1}.
\end{equation*}
This work has since been expanded to a subset of parking functions which consist of a permutation on the multiset $
\{1^{r+1},2,3,4,\ldots,n\}$, where $r+1$ denotes the multiplicity of $1$ \cite{flat_pf}. 
Baril, Harris, and Ram\'{i}rez studied flattened Catalan words and some related statistics \cite{baril2024flattenedcatalanwords}, and 
Buck, Elder, Figueroa, Harris, Harry, and Simpson studied
flattened Stirling permutations \cite{flat_sp}, giving enumerative formulas and bijections to other combinatorial objects such as type $B$ set partitions. 

We consider the analogous problem of enumerating flattened Fubini rankings. 
We recall that a \textit{Fubini ranking of length $n$} is a tuple $r=(r_1,r_2,\ldots,r_n)\in [n]^n$ that records a valid ranking over $n$ competitors with ties allowed (i.e., multiple competitors can be tied and have the same rank).
However, if $k$ competitors are tied and rank $i$th, then the $k-1$ subsequent ranks $i + 1, i + 2, \ldots, i + k - 1$ are disallowed.  
We let $\FR_n$ denote the set of all Fubini rankings on $n$ competitors and let $\fr_n=|\FR_n|$ denote its cardinality. 
If $n=3$, then 
\[\FR_3=\left\{\begin{matrix}
    (1,2,3),(1,3,2),(2,1,3),(2,3,1),(3,1,2),(3,2,1),\\(1,1,1),(1,1,3),(1,3,1),(3,1,1),(1,2,2),(2,1,2),(2,2,1),
\end{matrix}\right\}\]
and $\fr_3=13$.
It was shown by Cayley \cite{cayley_2009} that the number of Fubini rankings is given by the $n$th Fubini number \cite[\seqnum{A000670}]{OEIS}
\[\fr_{n}=\sum _{{k=0}}^{n}\sum _{{j=0}}^{k}(-1)^{{k-j}}{\binom  {k}{j}}j^{n}.\label{fubini numbers}\]

We remark that Fubini rankings have received recent attention in the literature due to their connection to parking functions. Harris and Haddaway established that Fubini rankings are in bijection with unit interval parking functions, see \cite{hadaway2022combinatorial}, thereby establishing that the number of unit interval parking functions is also a Fubini number. 
Utilizing the connection between unit interval parking functions and Fubini rankings, Brandt, Elder, Harris, Rojas Kirby, Reutercrona, Wang, and Whidden~\cite{unit_pf} gave an
identity for the Fubini numbers as a sum of multinomials over compositions. 
They also considered a generalization of Fubini rankings, called the $r$-Fubini rankings of length $n+r$, whose first $r$ values are distinct.
Their main result established a bijection between $r$-Fubini rankings and unit interval parking functions of length $n+r$ where the first $r$ cars have distinct preferences. 
 This showed that those parking functions are also enumerated by the $r$-Fubini numbers.
 In other work, Elder, Harris, Kretschmann, and Mart\`inez Mori \cite{elder2023boolean} studied the subset of Fubini rankings called \textit{unit Fubini rankings}, which satisfy that at most two competitors tie for any single rank.  
 A main contribution of Elder, Harris, Kretschman, and Mart\'inez Mori \cite{elder2023boolean}, established that the unit Fubini rankings with exactly $n-k$ distinct ranks are in bijection with Boolean intervals of rank $k$ in the weak (Bruhat) order of the symmetric group $\Sym_{n}$.

In our work, we consider the natural question lying at the intersection of these themes: \textit{How many Fubini rankings on $n$ competitors are flattened?}
Given that Fubini rankings are a super set of the set of permutations, we first note that defining a word to be flattened requires two inequality conditions: a condition on the definition of the runs as strict ascents or weak ascents, and a condition 
on the leading terms to either be strictly increasing or weakly increasing. 
We follow the definitions in \cite{flat_pf}, and define \textit{weakly flattened Fubini rankings with runs of weak ascents} to be a Fubini ranking with runs of weak ascents and whose leading terms are in weakly increasing order. 
For example, $(1,3,3,1)$ is a weakly flattened Fubini ranking with runs of weak ascents that contains two runs: $1,3,3$ and $1$. The run $1,3,3$ is allowed because we are dealing with runs of weak ascents and the two runs are allowed to both begin with $1$ because the leading terms are required to be in weakly increasing order.
We also define \textit{flattened Fubini rankings with runs of ascents}, which are Fubini rankings whose runs are made of strict ascents and whose leading terms are in strictly increasing order. 
For example, $(1,2,2,4)$ contains two runs of (strict) ascents, $1,2$ and $2,4$, whose leading terms are in (strictly) increasing order.
Note that in this way all strictly flattened Fubini rankings are weakly flattened, but the converse is not true. For the rest of this paper, if the flattness condition or the runs are not specified to be strict or weak, strict is assumed to be the default.

Much of our work utilizes the \emph{content of a Fubini ranking}, and the \emph{reduced content of a Fubini Ranking}. The content of a Fubini ranking is the traditional definition of content: for example, the element $(1,3,1,3,5,6)\in \FR_6$ has content $(2,0,2,0,1,1)$, where each entry $x_i$ in the content represents the number of times $i$ appears. In Section~\ref{sec:weakflat}, we discuss the concept of reduced content which removes all the 0's from our content vector, and how we can recover the content of a Fubini ranking based on the reduced content.

Our main contributions are as follows:

\begin{enumerate}
    \item Lemma~\ref{lem:Fib}:
     Let $\NDFRn$ denote the set of weakly increasing Fubini rankings and $\NIFRn$ denote the set of weakly decreasing Fubini rankings.  If $n\geq 1$, then 
    \[|\Fub_n^\uparrow|=|\Fub_n^\downarrow|=2^{n-1}.\]%updatedJE2/25

    \item \Cref{wwenum}: Let $\textbf{a}= (a_1,a_2,\ldots,a_k)\in\mathbb{Z}_{\geq 1}^n$ be a composition of $n$. Then the number of  weakly flattened Fubini rankings with runs of weak ascents that have reduced content $\textbf{a}$ is:
\footnotesize{$$ \hspace{0.3in} \sum_{j_2=0}^{a_2}\left [\binom{a_1+a_2-j_2-2}{a_2-j_2}\sum_{j_3=0}^{a_3}\left [\binom{a_1+j_2+a_3-j_3-2}{a_3-j_3}\sum_{j_4=0}^{a_4}\left [\ldots \sum_{j_k=0}^{a_k}\left [\binom{a_1+a_k-j_k-2+\displaystyle\sum_{r=2}^{k-1}j_{r}}{a_k-j_k}\right ]\ldots\right ]\right ]\right ].$$}%updatedJE2/25

    \item \normalsize{Corollary~\ref{cor:ends_in_1}: If $\textbf{a}= (a_1,a_2,\ldots,a_k)\in\mathbb{Z}_{\geq 1}^n$ is a composition of $n$, then
the number of weakly flattened Fubini rankings with runs of weak ascents that have reduced content $\textbf{a}$ ending in an entry with value 1 is:
    $$\prod_{i=2}^k\binom{a_1+a_i-2}{a_i}.$$}%updatedJE2/25
    \item Theorem~\ref{thm:wfFub}: Let $|\wflatwruns_k(\FR_n)|$ denote the number of weakly flattened Fubini rankings of length $n>0$ with $k>0$ runs of weak ascents. Then $|\wflatwruns_k(\FR_{j})=0|$, exactly when $k>\lceil \frac{j}{2}\rceil$ for any $j\in \mathbb{Z}_{\geq 1}$.%updatedJE2/25
    \item Proposition~\ref{prop:end_in_1}: Let 
$\textbf{a}=(a_1,a_2,\ldots,a_n)$ denote the content of a Fubini ranking with $k\leq a_1\neq n-k+1$. Let
$F_{n,k}(\bf{a})$ be the number of weakly flattened Fubini rankings of length $n$ with content $\bf{a}$ which have $k$ runs of weak ascents and also end in the value $1$. Then 
    \[F_{n,k}(\textbf{a})=\binom{a_1-1}{k-1}\cdot|\mathcal{B}_k(\textbf{a})|,\]
    where $\mathcal{B}_k(\bf{a})$ is the set of 
$n\times (k-1)$ nonnegative integer valued matrices whose $i$th row is a weak composition of $a_i$ and whose column sums are always greater than or equal to $1$.%updatedJE2/25
 \item \normalsize{Theorem~\ref{thm:sec6}: %Recopied 2/25, Kenny
    Let ${\bf a} = (a_1,a_2,\ldots,a_n)$ be a weak composition of $n$ and the content of a Fubini ranking with $n$ competitors. Let $a$ be the median entry in a Fubini ranking, $f$, with content $\textbf{a}$, $m_a$ be the number of entries in $f$ strictly less than the median $a$, and $M_a$ be the number of entries in $f$ strictly greater than the median $a$. 
    Then exactly one of the following holds.
    \begin{enumerate}
        \item If $m_a = M_a = 0$, then the maximum possible number of runs in $f$ is $1$.
        \item If $0=m_a<M_a$, then the maximum possible number of runs in $f$ is $M_a+1$.
        \item If $0=M_a<m_a$, then the maximum possible number of runs in $f$ is $m_a$.
        \item If $m_a,M_a>0$, then the maximum possible number of runs in $f$ is the minimum of $m_a + M_a$ and $\lceil\frac{n}{2}\rceil$.
    \end{enumerate}}
    
    \item Theorem~\ref{sscontent}: %recopied 2/25, Kenny
Let ${\bf a} = (a_1,a_2,\ldots,a_k)\in \mathbb{Z}^k_{\geq 1}$ be a composition of $n$ with $k$ parts. Then there exists a flattened Fubini ranking with runs of ascents whose reduced content is $\textbf{a}$ if and only if $a_{i}\leq i$ for all $1\leq i\leq k$.
    % If ${\bf a} = (a_1,a_2,\ldots,a_k)$ is valid content for a Fubini ranking, then it will be valid content for a strongly flattened Fubini ranking with runs of strong ascents if and only if $a_{j_i}\leq i$ where $a_{j_i}$ is the $i$th non-zero entry in the content.
    \item Theorem~\ref{wsenum}: %recopied 2/25, Kenny
    Let $(a_1,a_2,\ldots,a_k)$, a composition of $n$ containing positive entries, be the reduced content of a Fubini ranking. 
    Then the number of weakly flattened Fubini rankings with runs of ascents having reduced content $(a_1,a_2,\ldots,a_k)$ is given by \footnotesize{$$\sum_{j_2=0}^{a_2}\left (\binom{a_1-1}{a_2-j_2}\sum_{j_3=0}^{a_3}\left (\binom{a_1+j_2-1}{a_3-j_3}\sum_{j_4=0}^{a_4}\left (\ldots \sum_{j_k=0}^{a_k}\left (\binom{a_1+j_2+j_3+\cdots+j_{k-1}-1}{a_k-j_k}\right )\ldots\right )\right )\right ).$$}
    
\end{enumerate}

This article is organized as follows. 
In Section~\ref{sec:prelim} we present the technical definitions to make our approach precise, as well as some preliminary results related to Fubini rankings. 
In Section~\ref{sec:weakflat} we establish results related to Fubini rankings that are weakly flattened with runs of weak ascents.
In Section~\ref{sec:strongflat}, we establish results related to Fubini rankings that are flattened with runs of ascents.
In Section~\ref{sec:mixed_conditions}, we establish results related to Fubini rankings with a mix of weak and strict restrictions.
We conclude with Section~\ref{sec:future}, which summarizes our open questions, conjectures, and future directions for research in this area.
%\pamela{come back to this soon.}

\section{Preliminaries}\label{sec:prelim}

Given a word $w=w_1w_2\cdots w_n$ we say $w$ has a descent at $i$ if $w_{i}>w_{i+1}$ and an ascent at $i$ if $w_i<w_{i+1}$. 
A weak descent is an index $i$ where $w_i\geq  w_{i+1}$, a weak ascent is an index $i$ where $w_{i}\leq w_{i+1}$. 
We say a word is increasing if there are ascent at every index $i\in[n-1]$ and it is decreasing if there are descents at every $i\in[n-1]$. Whenever we say the word is weakly increasing, it means that the word has weak ascents at every index $i\in[n-1]$, and likewise we say the word is weakly decreasing if the word has weak descents at every index $i\in[n-1]$. Whenever $w_i=w_{i+1}$ we say $w$ has a tie at index $i$.

Given a word $w=w_1w_2\cdots w_n$ with entries in some totally ordered set $X$, usually we let $X=[n]$,  we can define 
\begin{itemize}
    \item \emph{runs of ascents} as the longest contiguous subwords $w_iw_{i+1}\cdots w_{j}$ such that $w_k<w_{k+1}$ for all $i\leq k\leq j-1$, and 
    \item \emph{runs of weak ascents} as the longest contiguous subwords $w_iw_{i+1}\cdots w_{j}$ such that $w_k\leq w_{k+1}$ for all $i\leq k\leq j-1$.
\end{itemize}
Regardless of  the makeup of the runs, which definition of runs we use, whether it is runs of ascents or runs of weak ascents, we define the following. 
\begin{itemize}
    \item The word $w$ is flattened if the leading terms of the runs are in increasing order, and 
    \item the word $w$ is weakly flattened if the leading terms of the runs are in weakly increasing order.
\end{itemize}
For example, 
$123245$ is a flattened word with runs of ascents,
and $123145233$ is weakly flattened with runs of weak ascents.
Baril, Harris, and Ramirez
\cite{baril2024flattenedcatalanwords} defined and studied flattened Catalan words as Catalan words whose runs of weak ascents have leading terms that appear in weakly increasing order.

Throughout our set of interest is the set of Fubini rankings of length $n$ and we use the following definitions:
\begin{itemize}
    % \item We denote an element of $\FR_n$ by $\x=(x_1,x_2,\ldots,x_n)$.
    % \item We let $\run(\x)$ be the number of runs of ascents of $\x$, and  $\wrun(\x)$ be the number of runs of weak ascents of $\x$.
    \item Let $\flatruns(\FR_n)$ be the set of flattened Fubini rankings with runs of ascents.
    \item Let $\flatwruns(\FR_n)$ be the set of flattened Fubini rankings with runs of weak ascents.
    \item Let $\wflatruns(\FR_n)$ be the set of weakly flattened Fubini rankings with runs of ascents.
    \item Let $\wflatwruns(\FR_n)$ be the set of weakly flattened Fubini rankings with runs of weak ascents.  
    \item Whenever we need to specify the number of runs we use a subscript of $k$. For example, $\flatruns_k(\FR_n)$ is the set of flattened Fubini rankings with $k$ runs of ascents. The analogous definitions hold for $\flatwruns_k(\FR_n)$, $\wflatruns_k(\FR_n)$, and $\wflatwruns_k(\FR_n)$.
\end{itemize}
As usual, we use $|X|$ to denote the cardinality of a set $X$.
    
% \tr{\texttt{run}= runs of ascents}\\
% \tr{\texttt{wrun}= runs of weak ascents}\\
% \tr{\texttt{flat}= strictly increasing leading terms}\\
% \tr{\texttt{wflat}= weakly increasing leading terms}\\
% \tr{$\texttt{flat\textunderscore runs}$= strictly increasing leading terms with runs of ascents}\\
% \tr{\texttt{wflatruns}= weakly increasing leading terms with runs of ascents}\\
% \tr{\texttt{wflatwruns}= }\\

% $ww(\FR_n)$ and $ww(\FR_{n,k})$, with runs of weak ascents and weakly increasing leading terms

% $ss(\FR_n)$ and $ss(\FR_{n,k})$, with runs of strict ascents and strictly increasing leading terms.

% Let $ws(\FR_n)$ be the set of Fubini rankings with runs of weak ascents and strictly increasing leading terms, and whenever we need to specify the number of runs to be $k$ we use the notation $ws(\FR_{n,k})$. 

% $sw(\FR_n)$ and $sw(\FR_{n,k})$, with runs of strict ascents and weakly increasing leading terms

We begin by counting Fubini rankings that are either weakly increasing or weakly decreasing tuples. 

\begin{lemma}\label{lem:Fib} Let $\NDFRn$ denote the set of weakly increasing Fubini rankings and $\NIFRn$ denote the set of weakly decreasing Fubini rankings.  If $n\geq 1$, then 
    \[|\Fub_n^\uparrow|=|\Fub_n^\downarrow|=2^{n-1}.\]
\end{lemma}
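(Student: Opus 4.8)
The plan is to show that each of the two sets is in content-preserving bijection with the set of compositions of $n$, which is classically known to have cardinality $2^{n-1}$. The first observation I would make is that whether a tuple $r\in[n]^n$ is a Fubini ranking depends only on its content (the multiset of entries): the defining requirement—if a rank $i$ is used with multiplicity $k$, then the ranks $i+1,\ldots,i+k-1$ are forbidden—is a statement about which values occur and with what multiplicity, not about their order. Consequently, reversing a tuple, $r=(r_1,\ldots,r_n)\mapsto(r_n,\ldots,r_1)$, sends weakly increasing Fubini rankings bijectively to weakly decreasing ones while preserving content, which immediately gives $|\NDFRn|=|\NIFRn|$. It therefore suffices to count one of them, say $\NDFRn$.

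Next I would set up the bijection with compositions. Since a weakly increasing tuple is determined by its content, I identify $r\in\NDFRn$ with its sequence of maximal constant blocks: let $v_1<v_2<\cdots<v_k$ be the distinct values occurring, with respective multiplicities $a_1,\ldots,a_k$, so that $a_1+\cdots+a_k=n$. I claim the Fubini condition forces $v_1=1$ and $v_{j+1}=v_j+a_j$ for each $j$: the value $1$ must be attained because first place is always occupied, and the multiplicity $a_j$ of $v_j$ forbids exactly the $a_j-1$ ranks strictly between $v_j$ and the next used rank. Hence $r$ is completely recovered from the composition $(a_1,\ldots,a_k)$ of $n$. Conversely, given any composition $(a_1,\ldots,a_k)$ of $n$, I would define $v_1=1$ and $v_{j+1}=v_j+a_j$ and verify that the resulting weakly increasing tuple is a valid Fubini ranking: its largest rank is $v_k=1+(a_1+\cdots+a_{k-1})=n-a_k+1\le n$, so all entries lie in $[n]$, and the block structure satisfies the Fubini condition by construction. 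This establishes a bijection between $\NDFRn$ and the compositions of $n$.

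Finally, I would invoke the standard fact that the number of compositions of $n$ equals $2^{n-1}$ (a composition is encoded by the subset of $\{1,\ldots,n-1\}$ recording its internal break points), concluding $|\NDFRn|=|\NIFRn|=2^{n-1}$. I expect the only step requiring genuine care to be the verification that the Fubini condition is \emph{exactly} equivalent to the recurrence $v_{j+1}=v_j+a_j$ together with $v_1=1$—in particular, confirming that rank $1$ is always used and that the constraint ``top rank at most $n$'' is automatically satisfied and never imposes an extra restriction; the remainder is routine bookkeeping.
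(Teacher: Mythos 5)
Your proof is correct, but it takes a genuinely different route from the paper. The paper's own proof is essentially a citation: it quotes the result $|\Fub_n^\uparrow|=2^{n-1}$ from an external report of Reutercrona, Wang, and Whidden, and then adds only the reversal bijection $(r_1,\ldots,r_n)\mapsto(r_n,\ldots,r_1)$ to transfer the count to $\NIFRn$. You use the same reversal step, but replace the citation with a self-contained bijective argument: a weakly increasing Fubini ranking is determined by its content, and the Fubini condition packs the used ranks so that $v_1=1$ and $v_{j+1}=v_j+a_j$, giving a bijection between $\NDFRn$ and compositions $(a_1,\ldots,a_k)$ of $n$, hence the count $2^{n-1}$. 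Two remarks on the comparison. First, the key fact you isolate --- that membership in $\FR_n$ depends only on the multiset of entries, and that valid contents correspond exactly to compositions of $n$ --- is precisely the ``reduced content'' machinery the paper develops later (Definition~3.1 and Lemma~3.4, where the same packed construction appears), so your proof effectively front-loads a tool the paper builds anyway; this makes the lemma self-contained at no real cost. Second, the one delicate point is the one you flag yourself: the paper's stated tie condition alone only forces $v_{j+1}\ge v_j+a_j$, and it is the ``valid ranking'' clause (equivalently, the correspondence with ordered set partitions, confirmed by $\fr_3=13$, e.g.\ $(1,3,3)\notin\FR_3$) that upgrades this to equality and forces rank $1$ to be used. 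Your proof handles this correctly, but it deserves the explicit sentence you promise rather than being left as ``routine bookkeeping.''
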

\begin{proof}
Reutercrona, Wang, and Whiden, proved that $|\Fub_n^\uparrow|=2^{n-1}$ \cite{summer_report}, and they also studied parking functions with a given descent set. More on this has been considered in \cite{cruz2024discretestatisticsparkingfunctions}. By taking the reverse of every weakly increasing Fubini ranking, we arrive at a weakly decreasing Fubini ranking, hence the result follows.
\end{proof}
    
\begin{corollary}
When we restrict to Fubini rankings with a single run, we have \[|\emph{\wflatwruns}_1(\FR_n)| = |\emph{\flatwruns}_1(\FR_n)| = 2^{n-1}.\] 
\end{corollary}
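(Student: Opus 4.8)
The plan is to reduce everything to Lemma~\ref{lem:Fib} by showing that the single-run condition collapses both families onto the same simple set. The first thing I would do is unpack what it means for a Fubini ranking to have exactly one run of weak ascents. By the definition of a run of weak ascents as a \emph{maximal} contiguous subword on which $w_k \le w_{k+1}$, having a single such run covering the whole tuple is equivalent to requiring $w_k \le w_{k+1}$ for every index $k \in [n-1]$. In other words, a Fubini ranking in $\wflatwruns_1(\FR_n)$ (or in $\flatwruns_1(\FR_n)$) is precisely a weakly increasing Fubini ranking, i.e.\ an element of $\NDFRn$.

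Next I would observe that the distinction between $\wflatwruns$ and $\flatwruns$ evaporates when $k=1$. The only difference between the two families is the condition imposed on the sequence of \emph{leading terms} of the runs: strictly increasing for $\flatwruns$, weakly increasing for $\wflatwruns$. But with a single run there is a single leading term, and any one-term sequence is trivially both strictly and weakly increasing. Hence the flattness condition is vacuous, and
\[
\wflatwruns_1(\FR_n) = \flatwruns_1(\FR_n) = \NDFRn.
\]

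Finally I would invoke Lemma~\ref{lem:Fib}, which gives $|\NDFRn| = 2^{n-1}$, to conclude that all three quantities agree and equal $2^{n-1}$. There is essentially no genuine obstacle here: the entire content is the two identifications above, namely that ``one run of weak ascents'' means ``weakly increasing'' and that the strict/weak distinction on leading terms is meaningless for a single run. The only point that warrants a careful sentence is the maximality clause in the definition of a run, which is what guarantees the single-run hypothesis forces a weak ascent at \emph{every} index rather than merely permitting one.
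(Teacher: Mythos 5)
Your proposal is correct and follows the same route as the paper's own proof: both identify $\wflatwruns_1(\FR_n)$ and $\flatwruns_1(\FR_n)$ with the weakly increasing Fubini rankings, note that the strict/weak flattness distinction is vacuous for a single run, and then invoke Lemma~\ref{lem:Fib} to obtain $2^{n-1}$. Your explicit appeal to the maximality clause in the definition of a run is a welcome touch of extra care, but it is the same argument.
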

\begin{proof}
The result follows from two observations. First, the set
${\wflatwruns}_1(\FR_n)$ and the set ${\flatwruns}_1(\FR_n)$
are equivalent to the set of weakly increasing Fubini rankings of length $n$. Second, as these rankings consist of a single weak run, weakly flattened and flattened agree. 
\end{proof}

\section{Weakly flattened Fubini rankings with runs of weak ascents}\label{sec:weakflat}
In this section, we consider the set of Fubini rankings of length $n$ with runs of weak ascents and whose leading terms are in weakly increasing order, denoted $\wflatwruns(\FR_n)$. 
To this end we introduce the following definition.

\begin{definition}\label{def:content_fub}
Given a Fubini ranking $(x_1,x_2,\ldots,x_n)\in\FR_n$ we define the \emph{content} of $\x$ by $\textbf{a}=(a_1,a_2,\ldots,a_n)$ where $a_i$ denotes the number of competitors with rank $i$. 
\end{definition}

Note that the content gives the multiplicity of each rank. Moreover if we know what ranks appear and the length of the Fubini ranking, then we know the content.

\begin{example}
If $\x=(1,3,3,2,5,5,5)\in\FR_7$, then its content is $(1,1,2,0,3,0,0)$.
\end{example}

Definition~\ref{def:content_fub} for the content  of a Fubini ranking $\textbf{a}$ is a weak composition of $n$ (with $n$ parts). Note that the converse is not true. 
For example, a weak composition starting with zero, would never arise as Fubini ranking.
In order to remove the entries with value zero we give the following definition.

\begin{definition}
    The \emph{reduced content} of $\x$ is the content of $\x$ with all entries of value $0$ removed.
\end{definition}

It is worth noting that the content can always be recovered from the reduced content because if entry $a_i\neq 0$ then the following $a_i-1$ entries will have value $0$. 
Unlike the content, the reduced content of two different Fubini rankings of length $n$ may have different lengths. For example, the Fubini ranking $(1,1,1,4,4)$ has reduced content $(3,2)$, while the Fubini ranking $(1,1,3,4,4)$ has reduced content $(2,1,2)$.
In light of these examples, we give the following definition.

\begin{lemma}\label{lem:valid reduced contents}
    Let $\textbf{a}= (a_1,a_2,\ldots,a_k)\in\mathbb{Z}_{\geq 1}^n$. Then there exists $\x$ a Fubini ranking of length $\sum_{i=1}^ka_i$ such that $\textbf{a}$ is the reduced content of $\x$.
\end{lemma}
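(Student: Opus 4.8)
The plan is to realize $\textbf{a}$ as the reduced content of an explicit, weakly increasing Fubini ranking. I would set $N=\sum_{i=1}^k a_i$ and introduce the partial sums $s_0=0$ and $s_i=a_1+\cdots+a_i$ for $1\le i\le k$, so that $s_k=N$; these mark where each successive rank begins. I would then define the tuple $\x$ of length $N$ in which the value $s_{i-1}+1$ occurs exactly $a_i$ times for each $i\in[k]$, arranged in weakly increasing order:
\[
\x=(\underbrace{1,\ldots,1}_{a_1},\ \underbrace{s_1+1,\ldots,s_1+1}_{a_2},\ \ldots,\ \underbrace{s_{k-1}+1,\ldots,s_{k-1}+1}_{a_k}).
\]
Because every $a_i\ge 1$, the largest value appearing is $s_{k-1}+1\le s_{k-1}+a_k=N$, so all entries lie in $[N]$ and $\x$ has the correct length.

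The only real verification is that $\x$ satisfies the Fubini condition. For each $i$ the value $s_{i-1}+1$ is used with multiplicity $a_i$, so the rule that ties for a rank $r$ with multiplicity $m$ forbid the ranks $r+1,\ldots,r+m-1$ requires that $s_{i-1}+2,\ldots,s_i$ be absent from $\x$; but by construction the next value to appear after $s_{i-1}+1$ is $s_i+1$, so precisely these $a_i-1$ ranks are skipped. The same holds for the final block at $s_{k-1}+1$, whose forbidden ranks $s_{k-1}+2,\ldots,N$ do not occur. Hence $\x\in\FR_N$. This is exactly the ``a block of value $a_i$ forces $a_i-1$ trailing zeros'' structure noted just before the lemma, so no difficulty arises here.

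Finally, reading off the content of $\x$, the nonzero multiplicities are precisely $a_i$ at position $s_{i-1}+1$ and zero elsewhere; deleting the zeros returns $(a_1,\ldots,a_k)=\textbf{a}$, so $\textbf{a}$ is the reduced content of $\x$. Since the statement is purely an existence claim and the construction is essentially forced by the structure of Fubini contents, I expect no genuine obstacle; the sole point needing care is confirming that the number of skipped ranks matches each multiplicity, which the partial-sum bookkeeping makes transparent.
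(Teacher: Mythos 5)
Your proposal is correct and follows essentially the same route as the paper: both construct the weakly increasing tuple in which the value $1+\sum_{i<j}a_i$ (your $s_{j-1}+1$) appears exactly $a_j$ times, then read off that its reduced content is $\textbf{a}$. The only difference is cosmetic—you verify the Fubini tie condition explicitly via the partial-sum bookkeeping, whereas the paper simply asserts the construction is a Fubini ranking.
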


\begin{proof}
    Let $\textbf{a}= (a_1,a_2,\ldots,a_k)\in\mathbb{Z}_{\geq 1}^n$. We can construct a Fubini ranking from this reduced content as follows:
    Insert $a_1$ many 1's. Then insert $a_2$ many $1+a_1$'s, and so on. 
    Thus
    \[\x=(\underbrace{1,\ldots,1}_{a_1\mbox{ times}},\underbrace{1+a_1,\cdots,1+a_1}_{a_2\mbox{ times}},\ldots,\underbrace{1+\sum_{i=1}^{j-1}a_i,\ldots,1+\sum_{i=1}^{j-1}a_i}_{a_j \mbox{ times}},\ldots,\underbrace{1+\sum_{i=1}^{k-1}a_i,\ldots,1+\sum_{i=1}^{k-1}a_i}_{a_{k}\mbox{ times}}).\]
    By construction this is a Fubini ranking, with content
    \begin{align}\label{almost home free}(a_1,\underbrace{0,\ldots,0}_{a_{1}-1}, a_2, \underbrace{0,\ldots,0}_{a_2-1 },\ldots, \underbrace{0,\ldots, 0}_{a_{j-1}-1 },a_j,\underbrace{0,\ldots,0}_{a_j-1},\ldots, \underbrace{0,\ldots,0}_{a_{k-1}-1} ,a_k, \underbrace{0,\ldots,0}_{a_k-1}).\end{align}
    The proof follows from the fact that the reduced content of the vector in \eqref{almost home free} is  $\textbf{a}$.
\end{proof}

% Due to the above lemma, we do not need to consider whether a reduced content is valid.

Based on the construction of the previous result we have the following.
\begin{lemma}\label{lem:valid content}
    If $\textbf{a}= (a_1,a_2,\ldots,a_k)\in\mathbb{Z}_{\geq 1}^k$ is a composition of $n$, then there exists a weakly flattened Fubini ranking with runs of weak ascents having reduced content $\textbf{a}$.
\end{lemma}

\begin{proof}
    Consider a sequence with reduced content ${\bf a}$ in weakly increasing order constructed in \Cref{lem:valid reduced contents}. If we are allowing runs of weak ascents, this creates a single run. Thus this sequence will be a weakly flat Fubini ranking with a single run of weak ascents because, with only a single run, there is no way for the leading elements of runs to decrease.
\end{proof}

\begin{theorem}
\label{wwenum}
Let $\textbf{a}= (a_1,a_2,\ldots,a_k)\in\mathbb{Z}_{\geq 1}^n$ be a composition of $n$. Then the number of  weakly flattened Fubini rankings with runs of weak ascents that have reduced content $\textbf{a}$ is:
% \footnotesize{$$\sum_{j_2=0}^{a_2}\left (\binom{a_1+a_2-j_2-2}{a_2-j_2}\sum_{j_3=0}^{a_3}\left (\binom{a_1+j_2+a_3-j_3-2}{a_3-j_3}\sum_{j_4=0}^{a_4}\left (\ldots \sum_{j_k=0}^{a_k}\left (\binom{a_1+j_2+j_3+\cdots+j_{k-1}+a_k-j_k-2}{a_k-j_k}\right )\ldots\right )\right )\right ).$$}
\footnotesize{$$\sum_{j_2=0}^{a_2}\left [\binom{a_1+a_2-j_2-2}{a_2-j_2}\sum_{j_3=0}^{a_3}\left [\binom{a_1+j_2+a_3-j_3-2}{a_3-j_3}\sum_{j_4=0}^{a_4}\left [\ldots \sum_{j_k=0}^{a_k}\left [\binom{a_1+a_k-j_k-2+\displaystyle\sum_{r=2}^{k-1}j_{r}}{a_k-j_k}\right ]\ldots\right ]\right ]\right ].$$}
\end{theorem}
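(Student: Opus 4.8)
The plan is to reduce the statement to a purely combinatorial count and then prove it by an insertion recursion that adds the ranks in increasing order. First I would observe that whether a tuple is a Fubini ranking depends only on its content, so every rearrangement of the multiset built in \Cref{lem:valid reduced contents} (namely $a_i$ copies of the value $1+\sum_{r<i}a_r$ for each $i$) is automatically a Fubini ranking with reduced content $\textbf{a}$. Hence counting the weakly flattened members with runs of weak ascents is the same as counting the rearrangements of this multiset whose runs of weak ascents have weakly increasing leading terms. A preliminary observation I would record is that such a word must begin with its minimum entry: any occurrence of the smallest value forces the leading term of its run to equal that value, and weak monotonicity of the leading terms then forces that occurrence into the first run.

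Next I would set up the recursion by inserting the distinct values $1<2<\cdots<k$ one at a time in increasing order, so that the value being inserted is always the current maximum. The key statistic I attach to a weakly flattened word $w$ is $N(w)$, the number of \emph{weak right-to-left minima} of $w$ (positions $p$ with $w_p\le w_q$ for all $q>p$); the base word $1^{a_1}$ has $N=a_1$. The heart of the argument is an insertion lemma: given a weakly flattened word $u$ on the $i-1$ smallest values, the extensions obtained by inserting $a_i$ copies of the next (hence largest) value correspond bijectively to placements of these copies into the $N(u)$ \emph{admissible gaps} of $u$, which are the terminal gap together with the gaps immediately preceding each weak right-to-left minimum. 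I would prove this bijection by showing that inserting at least one top value just before a weak right-to-left minimum $f$ opens a new run with leading term $f$ while preserving weak monotonicity of the leading terms (precisely because $f$ lies weakly below everything to its right), and that the inverse operation—deleting all copies of the top value—returns a weakly flattened word, the possible merging of runs being harmless since the surviving leading terms form a subsequence of a weakly increasing sequence.

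Writing $j_i$ for the number of top copies dropped into the terminal gap and distributing the remaining $a_i-j_i$ copies among the other $N(u)-1$ admissible gaps, a stars-and-bars count gives exactly $\binom{N(u)+a_i-j_i-2}{a_i-j_i}$ extensions for each value of $j_i$. Crucially I would then verify the state update $N(v)=N(u)+j_i$: deleting the top value shows every earlier right-to-left minimum survives (appending larger entries to the right cannot destroy the defining inequality), the internal top copies are never right-to-left minima, and exactly the $j_i$ trailing top copies become new ones. Since both the number of extensions and the updated statistic depend on $u$ only through $N(u)$, the count propagates as a transfer recursion; unfolding it from $N=a_1$, with the $j_i$ accumulating so that the state entering step $i$ is $a_1+\sum_{r=2}^{i-1}j_r$, reproduces the displayed nested sum.

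I expect the main obstacle to be the insertion lemma, and within it the precise identification of the admissible gaps with the weak right-to-left minima: one must argue carefully in both directions (no admissible placement is missed and none creates a descent among leading terms) and, most importantly, that both the resulting count and the updated statistic depend on $u$ only through $N(u)$. This ``state sufficiency'' is exactly what licenses the transfer recursion and hence the closed nested-sum formula; the stars-and-bars bookkeeping, the base case $N(1^{a_1})=a_1$, and the degenerate situations with $N(u)=1$ are then routine.
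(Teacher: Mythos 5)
Your proposal is correct and takes essentially the same route as the paper: both arguments insert the distinct values in increasing order and, at each step, count stars-and-bars placements of the $a_i$ copies of the current maximum into the admissible positions, with the running state $a_1+j_2+\cdots+j_{i-1}$ governing the binomial coefficient exactly as in the displayed formula. Your characterization of the admissible gaps via weak right-to-left minima, together with the invariant $N(v)=N(u)+j_i$, is a sharper formalization of what the paper describes only operationally (``places between the $1$s and among the trailing entries, extending existing runs''), but the decomposition, the recursion, and the bookkeeping are identical.
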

\begin{proof} We proceed by constructing a weakly flattened Fubini ranking of length $n$ with runs of weak ascents. 
To do this we ensure that the reduced content of the Fubini ranking is indeed $\textbf{a}=(a_1,a_2,\ldots,a_k)$, as specified in the statement of the theorem. To do this, we first place the $a_1$ entries of value $1$ first. 
Since these are the first elements placed, this can be done in one way. Next we place the $a_2$ entries with value $a_1+1$ in such a way that $j_2$ of them come after the final $1$. Since we want $a_2 - j_2$ entries of value $a_1+1$ to come before the final $1$, but they must come after the first $1$ in order for the sequence to be flattened, there are $\binom{a_1+a_2-j_2-2}{a_2-j_2}$ ways to place the $a_2-j_2$ copies of the  value $a_1+1$ between the $1$s. 
Since the $a_2-j_2$ values of $a_1+1$ need to come after the first $1$ and before the last $1$, this case is only possible if $a_1\geq 2$. 
In the other case, where $a_1=1$, there is a unique way to place the $a_2$ values of $a_1+1$, immediately to the right of the $1$.

Continuing iteratively, if the reduced content contains three or more entries, we place the $a_3$ copies of the next value ($a_1+a_2+1$) so that $a_3-j_3$ of them come before the final entry of lesser value, which may be either $1$ or $a_1+1$. If an entry is placed between two entries of value $1$, then it must be placed to continue the run between those entries, otherwise the Fubini ranking will not be flattened. 
However, entries can be placed anywhere in between the $j_2$ entries with value $a_1+1$ that were placed after the final entry with value $1$. Thus there are $a_1+j_2-1$ places to put the new elements since they cannot go before the first $1$ or after the final element that has already been placed. Since there are $a_3-j_3$ values to be placed and multiple elements can be placed in between the same existing elements, this is equivalent to placing $a_3-j_3$ indistinguishable balls into $a_1+j_2-1$ urns. This can be done $\binom{a_1+j_2-2+a_3-j_3}{a_3-j_3}$ ways.

This continues until it is time to place the final $a_k$ entries with value $a_1+a_2+\cdots+a_{k-1}+1$. 
We will place $j_k$ of them at the very end of the sequence and the remaining $a_k-j_k$ values in the middle of the existing sequence. These $a_k-j_k$ entries can be placed anywhere between the $j_{k-1}$ entries of the next to largest value at the end of the sequence. 
However, if they are placed between entries with values less than the next to largest, then they must be placed so as to extend the existing run, otherwise the resulting sequence will not be flattened. Thus the values can be placed into the existing sequence in $\binom{a_1+j_2+j_3+\cdots+j_{k-1}+a_k-j_k-2}{a_k-j_k}$ ways.

Since at each step we are making independent choices, the number of ways to make them is the product of the number of choices at each step. However, we need to sum over all possible allowable values for the numbers $j_2,j_3,\ldots,j_k$.
This is exactly the stated formula in the result.
\end{proof}

By specifying the last entry of a Fubini ranking to be 1, we give the following result.

\begin{corollary}\label{cor:ends_in_1}
If $\textbf{a}= (a_1,a_2,\ldots,a_k)\in\mathbb{Z}_{\geq 1}^n$ is a composition of $n$, then
the number of weakly flattened Fubini rankings with runs of weak ascents that have reduced content $\textbf{a}$ ending in an entry with value 1 is:
    $$\prod_{i=2}^k\binom{a_1+a_i-2}{a_i}.$$
\end{corollary}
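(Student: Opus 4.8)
The plan is to obtain this as a direct specialization of \Cref{wwenum}. Recall that in the proof of \Cref{wwenum} the count is organized by parameters $j_2,j_3,\ldots,j_k$, where $j_i$ records how many of the $a_i$ copies of the $i$th distinct value are placed after the final entry of strictly smaller value (the ``tail'' copies), the remaining $a_i-j_i$ copies being inserted among the entries already placed. The key observation is that requiring the Fubini ranking to end in the value $1$ is exactly the condition $j_2=j_3=\cdots=j_k=0$, so that the multiple sum collapses to a single product.

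First I would justify this equivalence. Since $1$ is the smallest rank occurring, the ranking ends in $1$ if and only if no entry of value greater than $1$ lies to the right of the last $1$. (Equivalently, because the leading terms of the runs are weakly increasing and $1$ is minimal, ending in $1$ forces every run to begin with $1$.) I argue by induction that this forces all $j_i=0$: if $j_2>0$ then a copy of the second value is placed after the final $1$, so the ranking cannot end in $1$; hence $j_2=0$, which means the last $1$ is also the final entry of value less than the third value, so $j_3>0$ would again place a larger entry to the right of the last $1$, and so on. Conversely, if every $j_i=0$ then all entries exceeding $1$ are inserted strictly between the first and last $1$, so the ranking ends in $1$.

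With the equivalence in hand, I would set $j_2=\cdots=j_k=0$ in the formula of \Cref{wwenum}. Each nested summation then contributes only its $j_i=0$ term, and the running index $\sum_{r=2}^{i-1}j_r$ appearing inside the $i$th binomial coefficient vanishes, so the $i$th factor reduces to $\binom{a_1+a_i-2}{a_i}$. Taking the product over $i=2,\ldots,k$ yields $\prod_{i=2}^{k}\binom{a_1+a_i-2}{a_i}$, as claimed. I expect the main (really the only) subtlety to be the inductive verification that the tail parameters must all vanish, since the meaning of $j_i$ depends on the earlier $j_r$ already being zero; the binomial bookkeeping afterward is immediate. As a sanity check, the degenerate case $a_1=1$ makes each factor $\binom{a_i-1}{a_i}=0$, correctly reflecting that a single $1$ forced to the end leaves no room for larger values while keeping the leading terms weakly increasing.

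As a cross-check, one could argue directly: ending in $1$ forces all entries greater than $1$ to lie in the $a_1-1$ gaps between consecutive $1$'s, and within each gap the entries must appear in weakly increasing order to avoid creating a run whose leading term exceeds $1$. Distributing the $a_i$ copies of the $i$th value among these $a_1-1$ gaps independently gives $\binom{(a_1-1)+a_i-1}{a_i}=\binom{a_1+a_i-2}{a_i}$ choices for each $i$, and multiplying over $i=2,\ldots,k$ recovers the same product.
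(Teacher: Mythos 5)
Your proposal is correct, and it really contains two proofs. Your primary route --- arguing that ``ends in $1$'' is equivalent to $j_2=\cdots=j_k=0$ in the parameterization from the proof of Theorem~\ref{wwenum}, and then collapsing the nested sums to their all-zero terms --- is packaged differently from the paper's proof: the paper instead re-runs the construction of Theorem~\ref{wwenum} under the constraint, observing that every entry larger than $1$ must be inserted into one of the $a_1-1$ gaps between consecutive $1$s (within a gap the placement is forced, extending an existing run), so that a stars-and-bars count gives $\binom{a_1+a_i-2}{a_i}$ independently for each $i\geq 2$, whence the product. That direct argument is precisely your ``cross-check'' paragraph, so your fallback coincides with the paper's proof. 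Comparing the two routes: your specialization makes the corollary a formal consequence of the theorem's formula, but it needs the equivalence argument --- your induction correctly handles the subtlety that the meaning of $j_i$ presupposes the earlier $j_r$ vanish, and it implicitly uses that distinct parameter tuples $(j_2,\ldots,j_k)$ index disjoint sets of rankings, which is indeed built into the theorem's proof; the paper's re-construction avoids this bookkeeping entirely and is self-contained. Both are valid, and your sanity check at $a_1=1$ (each factor $\binom{a_i-1}{a_i}=0$ for $k\geq 2$) is consistent with either argument.
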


\begin{proof}
    As in the proof of \Cref{wwenum}, we construct Fubini rankings by first placing the $1$s, then the $a_1+1$s, and so forth. However, since every element larger than $1$ ends up between two elements of value $1$, they must be placed within existing runs in order to ensure that the Fubini ranking is flattened. 
    Thus, there are $a_1-1$ places between $1$s to place the $a_2$ entries of value $a_1+1$. 
    So the number of ways to construct the sequence at this step is given by $\binom{a_1+a_2-2}{a_2}$.
    For each such sequence,
    we now place the $a_3$ values of $a_1+a_2+1$ into the same $a_1-1$ gaps between instances of $1$s and once we choose which gaps to place those values  in, the Fubini ranking we construct is unique since those values must be placed at the end of the run.
    This will ensure the constructed Fubini ranking is flattened.
    The number of ways to do this is given by $\binom{a_1+a_3-2}{a_3}$. Since these are independent choices, at this step the current total count of sequences constructed is $\binom{a_1+a_2-2}{a_2}\binom{a_1+a_3-2}{a_3}$.
    Continuing this process for each remaining $4\leq i\leq k$, the independence of the choices implies that the final number of weakly flattened Fubini rankings with runs of weak ascents ending with a value of $1$ is given by the product 
    \[\prod_{i=2}^k\binom{a_1+a_i-2}{a_i}.\qedhere\]
    %, if any, exists between those two entries of value $1$.
\end{proof}

We conclude by noting the following implications of \Cref{cor:ends_in_1}. First, whenever the tuple $\textbf{a}=(1,1,\ldots,1)\in\mathbb{Z}^n$, the number of weakly flattened Fubini rankings with runs of weak ascents ending with a value of $1$ is zero. As should be expected since having $\textbf{a}=(1,1,\ldots,1)\in\mathbb{Z}^n$ means the Fubini ranking is a permutation, and a permutation ending in $1$ (provide $n>1$) cannot be flattened.
Second, the count given in the result is constant when permuting the $k-1$ last entries in $\textbf{a}$.  

\subsection{Weakly flattened Fubini rankings with \texorpdfstring{$k$}{k}~runs of weak ascents}
\Cref{tab:weakly flattened Fubini with k runs} provides data on the number of weakly flattened Fubini rankings with $n$ competitors and $k$ runs of weak ascents, which we denote by $\wflatwruns_k(\FR_n)$. We remark that it is an open problem to determine formulas for $\wflatwruns_k(\FR_n)$. 
However, the row sums are given by the formula in \Cref{wwenum}.

% Can we show why 
% $f(2j,j+1)=0$, for $j>1$? In fact we could show: 
% 

% Reason, something to do with not having enough room to build this many runs and still be flattened.
% % (2,2)
% % 4,3=>n=2
% 6,4
% 8,5

\begin{table}[h!]
\centering
\begin{tabular}{|c||c|c|c|c|c|}
\hline
$n\backslash k$ & 1 & 2 & 3 & 4 & 5 \\
\hline\hline
1 & 1 & 0 & 0 & 0 & 0 \\
\hline
2 & 2 & 0 & 0 & 0 & 0 \\
\hline
3 & 4 & 2 & 0 & 0 & 0 \\
\hline
4 & 8 & 16 & 0 & 0 & 0 \\
\hline
5 & 16 & 84 & 16 & 0 & 0 \\
\hline
6 & 32 & 368 & 244 & 0 & 0 \\
\hline
7 & 64 & 1464 & 2264 & 208 & 0 \\
\hline
8 & 128 & 5504 & 16632 & 5080 & 0 \\
\hline
9 & 256 & 19984 & 106808 & 72504 & 3776 \\
\hline
10 & 512 & 70976 & 630016 & 794552 & 133792 \\
\hline
\end{tabular}
\caption{Values for $\wflatwruns_k(\FR_n)$, the number weakly flattened Fubini rankings on $n$ competitors with $k$ runs of weak ascents.}\label{tab:weakly flattened Fubini with k runs}
\end{table}

% For the interested reader, we provide \tr{GITHUB LINK HERE} with the code used to compute the values presented in \Cref{tab:weakly flattened Fubini with k runs}.
We conclude with the following set of results related to the value of $\wflatwruns_k(\FR_n)$. Note that \Cref{runOne}, \Cref{tooManyRuns}, and \Cref{thm:wfFub} assume that the runs in the Fubini ranking are defined with weak ascents and that the Fubini ranking be at least weakly flattened, but all of the proofs will hold if the condition is strengthened to flattened.

\begin{lemma}\label{runOne}
    In a flattened Fubini ranking with runs of weak ascents the only run which can contain only a single element is the final run in the sequence. 
    Likewise, in a weakly flattened Fubini ranking with runs of weak ascents the only run which can contain only a single element is the final run in the sequence.
\end{lemma}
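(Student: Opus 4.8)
The plan is to argue by contradiction, exploiting the fact that two consecutive runs of weak ascents are always separated by a strict descent. First I would record this boundary observation: if a run of weak ascents ends at position $j$ and the next run begins at position $j+1$, then maximality of the runs forces $w_j > w_{j+1}$, since $w_j \le w_{j+1}$ would instead place both indices in a single run of weak ascents.

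Next, suppose toward a contradiction that some single-element run, say the run consisting of the lone entry $w_i$, is not the final run. Then there is a run immediately following it, beginning at position $i+1$ and having leading term $w_{i+1}$. Because this single-element run consists only of $w_i$, its leading term is $w_i$ itself, and the boundary observation yields $w_i > w_{i+1}$. Hence the leading term of this run strictly exceeds the leading term of the run that follows it.

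Finally I would observe that this directly contradicts the weakly flattened hypothesis, which requires the leading terms of consecutive runs to be weakly increasing, i.e.\ $w_i \le w_{i+1}$. Since the flattened hypothesis is strictly stronger, demanding that leading terms be strictly increasing, the same inequality $w_i > w_{i+1}$ supplies the contradiction in that case as well, so both statements follow from one argument.

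There is essentially no hard step here: the result is nearly immediate once one notes that run boundaries are strict descents. The only point requiring care is the bookkeeping that a single-element run has leading term equal to its unique entry, so that the strict descent at its right boundary is precisely a strict decrease in leading terms, which is exactly what the flattened (and weakly flattened) condition forbids. I would also remark that the Fubini ranking structure is not actually used; the statement holds for any word that is weakly flattened with runs of weak ascents.
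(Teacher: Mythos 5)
Your proof is correct and takes essentially the same approach as the paper: a single-element run that is not final would be followed by a run whose leading term is strictly smaller (since maximal runs of weak ascents meet at strict descents), contradicting the flattened or weakly flattened condition. Your closing remark that the Fubini structure is never needed is also accurate; the paper's only appeal to it occurs in a separate (and logically redundant) paragraph handling the case where the single-element run is the first run.
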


\begin{proof}
    Suppose the first run in a flattened Fubini ranking with runs of weak ascents contains only one element. Since every Fubini ranking starts with a leading element of value $1$, if there is a second element in the Fubini ranking then it would continue the run. Thus the whole Fubini ranking must contain a single element and the run of that single element is the final run in the sequence.

    Suppose, for contradiction, that there is a run containing a single element, which is followed by another run in the Fubini ranking. Since the Fubini ranking consists of weak ascents, we can conclude that the element of the latter run is strictly less than the single element in the former run. 
    Namely, there is a descent in the Fubini ranking from the end of a run to the start of the new run.
    However, this contradicts the assumption that the Fubini ranking is flat, which requires that the leading elements of runs (weakly or strictly) increase from left to right.

    Thus any run which contains a single element must not be followed by another run. Therefore, it must be the final run occurring in the sequence.
\end{proof}

\begin{corollary}
\label{tooManyRuns}
    Let $n\geq 1$. There exists a flattened Fubini ranking of length $n$ with runs of weak ascents. Likewise, there exists a weakly flattened Fubini ranking of length $n$ with runs of weak ascents.  %\kenny{or a weakly flattened Fubini ranking with runs of weak ascents} that contains a run with a single entry if and only if $n\neq 2$.\pamela{maybe we add the weakly flattened case as a corollary in the appropriate section? Just because it makes it hard to read with the or for me and we also cover that case in a future section so for organizational purposes it might be better in that section.} \jennifer{This section is for weakly flattened, so leave it as weakly flattened here, make the note elsewhere?}
\end{corollary}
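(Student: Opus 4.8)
The plan is to establish both non-emptiness claims at once by exhibiting a single Fubini ranking that lies simultaneously in $\flatwruns(\FR_n)$ and in $\wflatwruns(\FR_n)$. The guiding observation is that any \emph{weakly increasing} Fubini ranking consists of exactly one maximal run of weak ascents, and a run decomposition with a single run imposes no comparison between leading terms; hence such a ranking is automatically both flattened (strictly increasing leading terms) and weakly flattened (weakly increasing leading terms) with runs of weak ascents.

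First I would invoke \Cref{lem:Fib}, which gives $|\NDFRn| = 2^{n-1} \geq 1$ for every $n \geq 1$, so a weakly increasing Fubini ranking $r$ of length $n$ exists; for concreteness one may take the identity $(1,2,\ldots,n)$, which is strictly increasing and hence certainly weakly increasing. Next I would record that such an $r$ has a weak ascent at every index $i \in [n-1]$, so its decomposition into maximal runs of weak ascents is the single run $r$ itself, whose list of leading terms is the length-one list $(r_1)$.

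Finally, I would observe that a length-one list is vacuously both strictly increasing and weakly increasing, so $r$ belongs to $\flatwruns(\FR_n)$ and to $\wflatwruns(\FR_n)$ at the same time, establishing both existence claims for every $n \geq 1$. There is essentially no obstacle here; the only point deserving a line of justification is that the strict leading-term condition defining a flattened (as opposed to merely weakly flattened) ranking is satisfied vacuously by a single run, so the same witness settles the first, stronger-looking claim as well as the second.
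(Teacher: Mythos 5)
Your argument is correct for the statement as printed, and it is essentially the shortest possible one: a weakly increasing Fubini ranking exists for every $n\geq 1$ by \Cref{lem:Fib} (e.g.\ $(1,2,\ldots,n)$, or even $(1,1,\ldots,1)$), it forms a single maximal run of weak ascents, and a one-term list of leading terms vacuously satisfies both the strict and the weak leading-term condition, so one witness lands in $\flatwruns(\FR_n)$ and $\wflatwruns(\FR_n)$ simultaneously; this in effect re-derives the corollary immediately following \Cref{lem:Fib}, which already gives $|\flatwruns_1(\FR_n)|=|\wflatwruns_1(\FR_n)|=2^{n-1}\geq 1$. The paper's own proof, however, goes a genuinely different route, and the comparison exposes a mismatch between the printed statement and what that proof actually establishes: the paper treats $n=1$ separately, then argues a ``($\Rightarrow$)'' direction via \Cref{runOne} to rule out length $2$, and a ``($\Leftarrow$)'' direction constructing the permutation $1,3,4,\ldots,n,2$ for $n>2$. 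That structure only makes sense for the sharper claim that a flattened (equivalently, weakly flattened) Fubini ranking with runs of weak ascents \emph{containing a run with a single element} exists if and only if $n\neq 2$ --- which is what the label and the subsequent use of the corollary before \Cref{thm:wfFub} (bounding the number of runs, since at most one run can be a singleton) are really about. Your vacuous single-run witness settles the literal existence statement more cleanly than the paper does, but be aware it would not suffice for the intended refined statement: for $n\geq 2$ your witness contains no singleton run at all, so the obstruction at $n=2$ and the explicit construction $1,3,4,\ldots,n,2$ in the paper's proof are doing work that your argument does not replace.
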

\begin{proof}
Clearly if $n=1$ the entire Fubini ranking is a run with a single entry. Assuming $n>1$ the following arguments show both directions of the corollary statement.

    ($\Rightarrow$) By \Cref{runOne} any run with a single entry is the final run of the flattened (or weakly flattened) Fubini ranking, so there has to be at least two numbers. However, if there are exactly two numbers then the Fubini ranking would consist of a descent, which would imply that it is not flattened.  

($\Leftarrow$)    For $n>2$ it is possible to construct a Fubini ranking that contains a run with a single entry in the following way. Consider the permutation $1,3,4,\ldots,n,2$. Since it is a permutation, it is a Fubini ranking, and it will be both flattened and weakly flattened as they are equivalent if there are not two elements of the same value. 
Clearly it has two runs where the first begins with the element $1$ and the second with element $2$, so it satisfies being weakly flattened, and contains a run with a single element.
\end{proof}

% We remark that an equivalent result for flattened Fubini rankings with runs of weak ascents can be found in \Cref{tooManyRuns2}, as the proofs for the two cases are identical.

As illustrated by \Cref{tooManyRuns}, in order to be flattened a Fubini ranking cannot have too many runs. This leads us to the following result.

% \kenny{Does the following result hold for both flattened and weakly flattened Fubini rankings, as long as runs are considered with weak ascents?} \pamela{It might be true, I haven't thought about it, but maybe we again state as a corollary in the appropriate section.}

\begin{theorem}\label{thm:wfFub}
Let \emph{$|\flatwruns_k(\FR_n)|$} denote the number of flattened Fubini rankings, and \emph{$|\wflatwruns_k(\FR_n)|$} denote the number of weakly flattened Fubini rankings, each with runs of weak ascents, of length $n>0$, with $k>0$ runs of weak ascents. Then \emph{$|\wflatwruns_k(\FR_{j})|=0$}, exactly when $k>\lceil \frac{j}{2}\rceil$ for any $j\in \mathbb{Z}_{\geq 1}$. Likewise, \emph{$|\flatwruns_k(\FR_{j})|=0$} exactly when $k>\lceil \frac{j}{2}\rceil$ for any $j\in\mathbb{Z}_{\geq 1}$.
\end{theorem}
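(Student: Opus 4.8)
The statement is an ``if and only if'' on the vanishing of the counts, so the plan is to split into two directions and, crucially, to dispatch the flattened and weakly flattened counts simultaneously by producing a single witness. First I would prove the necessary inequality: whenever a (weakly) flattened Fubini ranking of length $j$ with $k$ runs of weak ascents exists, necessarily $k\le \lceil j/2\rceil$. This is where \Cref{runOne} does the work. It guarantees that among the $k$ runs only the last may be a singleton, so the first $k-1$ runs each contribute at least two entries and the last at least one. Summing the run lengths gives $j\ge 2(k-1)+1 = 2k-1$, hence $k\le (j+1)/2$, and since $k$ is an integer this is exactly $k\le \lfloor (j+1)/2\rfloor = \lceil j/2\rceil$. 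Taking the contrapositive yields $|\wflatwruns_k(\FR_j)| = |\flatwruns_k(\FR_j)| = 0$ whenever $k>\lceil j/2\rceil$, which settles one direction for both counts at once.

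For the converse I would exhibit, for every $k$ with $1\le k\le \lceil j/2\rceil$, a single permutation of $[j]$ that is flattened with exactly $k$ runs. Since a permutation has no repeated values, its runs of weak ascents coincide with its runs of strict ascents and ``flattened'' coincides with ``weakly flattened'', so this one object lies in both $\flatwruns_k(\FR_j)$ and $\wflatwruns_k(\FR_j)$, forcing both counts to be positive. The construction I have in mind interleaves small leading terms with large second terms to force short runs, then collects the remaining entries into one long final run:
$$ 1,\ k{+}1,\ 2,\ k{+}2,\ \ldots,\ k{-}1,\ 2k{-}1,\ k,\ 2k,\ 2k{+}1,\ \ldots,\ j. $$
The first $k-1$ runs are the length-two blocks $(i,\,k+i)$ for $1\le i\le k-1$ (each an ascent $i<k+i$ followed by the descent $k+i>i+1$), and the final run is $k,\,2k,\,2k{+}1,\,\ldots,\,j$; the leading terms are $1,2,\ldots,k$, which strictly increase, so the word is flattened with exactly $k$ runs. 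One checks it is a genuine permutation of $[j]$ and that the inequality $j\ge 2k-1$, which is equivalent to $k\le\lceil j/2\rceil$, is precisely what guarantees the final run is non-empty.

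The routine-but-necessary step is the bookkeeping that this construction is well defined across all parities of $j$ and all boundary values of $k$. When $j=2k-1$ (so $j$ is odd and $k=\lceil j/2\rceil$) the final run degenerates to the singleton $k$, which is permitted exactly because it is the last run, by \Cref{runOne}; when $k=1$ the word collapses to the increasing sequence $1,2,\ldots,j$. I expect the main obstacle to be expository rather than mathematical: stating the interleaved construction cleanly enough that the maximality of each run (that consecutive runs are genuinely separated by descents and do not merge) and the permutation property are both transparent, and confirming that this one object simultaneously witnesses positivity for the strictly flattened and the weakly flattened counts. Once the construction is verified, combining it with the counting inequality of the first paragraph pins down the exact threshold $k>\lceil j/2\rceil$ for vanishing in both cases.
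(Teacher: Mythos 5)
Your proof is correct, and its first half coincides with the paper's: both derive the upper bound $k\le\lceil j/2\rceil$ from \Cref{runOne} (only the final run may be a singleton, so $j\ge 2(k-1)+1$), and both exploit the fact that a single permutation witness settles the flattened and weakly flattened counts simultaneously, since a permutation has no repeated entries and its runs of weak ascents coincide with its runs of ascents. Where you diverge is the existence direction. The paper constructs only the extremal case, an interleaved permutation of the form $1,j,2,j-1,\ldots$ with $\lceil j/2\rceil$ runs of length two, and then reaches every smaller $k$ by repeatedly merging two adjacent runs (sorting their union into one increasing block), which requires verifying that each merge keeps the leading terms increasing and does not disturb the preceding or subsequent runs. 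You instead write down a closed-form witness for each $k$ directly: the $k-1$ pairs $(i,\,k+i)$ followed by the increasing tail $k,2k,2k+1,\ldots,j$, whose run decomposition and leading terms $1,2,\ldots,k$ are checked in one pass, with the boundary cases $k=1$ and $j=2k-1$ degenerating correctly (the tail becomes the whole word, respectively the permitted final singleton). Your route buys a shorter verification with no iterative step and one explicit object per $k$; the paper's merging argument buys a reusable local operation, showing how to decrease the run count of any such ranking by one, which is of independent use. Either way the exact vanishing threshold $k>\lceil j/2\rceil$ follows for both counts.
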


\begin{proof}
From Lemma~\ref{runOne} we know that there is at most one run in the Fubini ranking which contains a single element. 
So, since there can be one run of length one and all of the remaining runs require at least two elements, we have that the most runs that could exist in a Fubini ranking of length $j$ is $\lfloor \frac{j-1}{2}\rfloor+1$.

When $j$ is even, $\lceil\frac{j}{2}\rceil = \frac{j}{2}$ will be one larger than $\lfloor\frac{j-1}{2}\rfloor$ so $\lfloor\frac{j-1}{2}\rfloor + 1 = \lceil\frac{j}{2}\rceil$. 
Similarly, if $j$ is odd, then $\lceil\frac{j}{2}\rceil$ is one larger than $\frac{j-1}{2}=\lfloor\frac{j-1}{2}\rfloor$ so, once again, $\lfloor\frac{j-1}{2}\rfloor + 1 = \lceil\frac{j}{2}\rceil$. 
This shows that the maximum number of runs which can be contained in a weakly flattened Fubini ranking of length $j$ is $\lceil\frac{j}{2}\rceil$.

To show that $|\wflatwruns_k(\FR_{j})|>0$ when $0<k\leq \lceil\frac{j}{2}\rceil$ it suffices to show we can construct a Fubini ranking of the specified length with the requisite number of runs. 
To achieve the upper-bound of $\lceil\frac{j}{2}\rceil$ consider a Fubini ranking where all entries are unique.
Namely, the Fubini ranking will be a permutation of $[n]$ and corresponds to a ranking with no ties. 
Now we arrange the elements as follows: $1,j,2,j-2,\ldots,\lfloor\frac{j+1}{2}\rfloor$. 
If $j$ is even, this will result in $\frac{j}{2}=\lceil\frac{j}{2}\rceil$ different runs, each containing two elements. 
If $j$ is odd, there will be $\frac{j-1}{2}$ runs with two elements and a final run containing only a single element, which gives a total of $\frac{j-1}{2}+1 = \lceil\frac{j}{2}\rceil$ runs.

If you wish to reduce the total number of runs in the Fubini ranking, simply take two adjacent runs then sort their elements in increasing order to create a single run. 
The smallest element of the resulting run is unchanged and the largest element has increased.
This merges the two adjacent runs without affecting the preceding of the subsequent runs, if they exist. 
In this way, a Fubini ranking with $k$ runs can be obtained for any value of $0<k\leq\lceil\frac{j}{2}\rceil$.
\end{proof}

\subsection{Weakly flattened Fubini rankings with runs of weak ascents and a fixed content}
In this section, we specify the content of a weak flattened Fubini ranking with runs of weak ascents and provide a result on the maximum number of runs of such a Fubini ranking. 
Enumerative results for these sets are difficult and so we begin by further restricting and giving a formula for a subset having a fixed final entry of value $1$.

\begin{proposition}\label{prop:end_in_1}
Let 
$\textbf{a}=(a_1,a_2,\ldots,a_n)$ denote the content of a Fubini ranking with $k\leq a_1\neq n-k+1$. Let
$F_{n,k}(\bf{a})$ be the number of weakly flattened Fubini rankings of length $n$ with content $\bf{a}$ which have $k$ runs of weak ascents and also end in the value $1$. Then 
    \[F_{n,k}(\textbf{a})=\binom{a_1-1}{k-1}\cdot|\mathcal{B}_k(\textbf{a})|,\]
    where $\mathcal{B}_k(\bf{a})$ is the set of 
$n\times (k-1)$ nonnegative integer valued matrices whose $i$th row is a weak composition of $a_i$ and whose column sums are always greater than or equal to $1$.
\end{proposition}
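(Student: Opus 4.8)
The plan is to establish a bijection between the rankings counted by $F_{n,k}(\textbf{a})$ and pairs $\big((c_1,\dots,c_k),\,M\big)$, where $(c_1,\dots,c_k)$ is a composition of $a_1$ into $k$ positive parts and $M\in\mathcal{B}_k(\textbf{a})$; the product formula then follows since the number of such compositions is $\binom{a_1-1}{k-1}$.

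First I would record the rigid structure forced by the hypotheses. Because the ranking is weakly flattened and ends in the value $1$, the leading term of its last run is at most $1$ and hence equals $1$; as the leading terms are weakly increasing, every one of the $k$ runs begins with $1$. Next, strengthening the idea behind \Cref{runOne}, I would show the final run consists only of $1$'s (a weakly increasing run ending in $1$ has all entries equal to $1$), whereas each of the first $k-1$ runs must contain an entry exceeding $1$: an internal run made up solely of $1$'s would end in $1$ and be immediately followed by a run beginning with $1$, creating a weak ascent that merges the two runs and contradicts maximality. This structural dichotomy is the heart of the argument.

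With this in hand I would define the decomposition. To a ranking assign (i) the tuple $(c_1,\dots,c_k)$ with $c_j\ge 1$ counting the $1$'s in the $j$th run, a composition of $a_1$ into $k$ positive parts; and (ii) the matrix $M$ whose $(i,j)$ entry is the number of copies of a value $i>1$ occurring in the $j$th run, for $1\le j\le k-1$. Since the last run contributes no value larger than $1$, every copy of each $i>1$ lies in one of the first $k-1$ runs, so the $i$th row of $M$ is a weak composition of $a_i$; and the requirement that each of the first $k-1$ runs contain a value exceeding $1$ is exactly the condition that every column sum of $M$ be at least $1$. Thus $M\in\mathcal{B}_k(\textbf{a})$.

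The crux, and the step I expect to be most delicate, is verifying this map is a bijection by reconstructing a unique admissible ranking from any such pair. Given $(c_1,\dots,c_k)$ and $M\in\mathcal{B}_k(\textbf{a})$, I would form the $j$th run (for $j\le k-1$) as $c_j$ copies of $1$ followed by the values $i>1$ listed with multiplicity $M_{i,j}$ in weakly increasing order, take the final run to be $c_k$ copies of $1$, and concatenate. One must check the concatenation has exactly $k$ maximal runs of weak ascents: no intended run can split, since each is weakly increasing by construction, and no two consecutive intended runs merge, since the column-sum condition forces each of the first $k-1$ runs to end in a value strictly greater than the $1$ that opens the next run, producing the required descent; because all leading terms equal $1$ they are weakly increasing, so the tuple is weakly flattened with content $\textbf{a}$ and final entry $1$. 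Uniqueness of the decomposition is immediate because within-run order is forced. Finally I would invoke the hypotheses $k\le a_1$ and $a_1\ne n-k+1$ to remain in the regime where both the set of compositions and $\mathcal{B}_k(\textbf{a})$ are nonempty, and conclude, by the independence of the two choices, that $F_{n,k}(\textbf{a})=\binom{a_1-1}{k-1}\cdot|\mathcal{B}_k(\textbf{a})|$.
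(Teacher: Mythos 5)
Your proposal is correct and takes essentially the same approach as the paper's proof: the paper likewise observes that a ranking ending in $1$ forces every run to begin with $1$, splits the $a_1$ ones into $k$ positive blocks (counted by $\binom{a_1-1}{k-1}$), and then distributes the values greater than $1$ into the first $k-1$ runs, weakly increasing within each run and with no run left empty, which is exactly your pair $\bigl((c_1,\ldots,c_k),M\bigr)$ with $M\in\mathcal{B}_k(\textbf{a})$. The only difference is one of rigor: you make the correspondence an explicit bijection and check that the intended runs neither split nor merge, steps the paper treats informally as a balls-in-urns count.
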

\begin{proof}
    In this case, as we must end with a 1, every run must start with a 1, if the Fubini ranking is to be flattened. 
    Hence, there can be no more runs than number of ones in the Fubini ranking.
    Let $\textbf{a}=(a_1,a_2,\ldots,a_n)$ be the content of the Fubini ranking. 
    To count  weakly flattened Fubini rankings with $k$ runs of weak ascents that end with a 1, we observe that the runs will always begin with a consecutive list of ones, and as there are $k$ runs, we need to have at least $k-1$ elements that are not ones, to place after the sequence of ones. 
    Thus, we must have that the number of ones is at least $k$ and no more than $n-k+1$. 
    Namely, 
    $k\leq a_1\leq n-k+1$. 
This now becomes a balls in urns problem, where we first create a composition of $a_1$ into $k$ parts, to account for the start of the distinct $k$ runs, all of which must begin with a 1.
Then for each such composition $c=(c_1,c_2,\ldots,c_k)\vdash a_1$, with $c_i>0$, we start each run with exactly $c_i$ ones. 
Note that there are $\binom{a_1-1}{k-1}$ such compositions.
Next, we must place the remaining $n-a_1$ elements ensuring to always place at least one of these elements between each of the runs. 
%% This count is given by the number of compositions of $n-a_1$ into $k-1$ parts, which is given by $\binom{n-a_1-1}{k-2}$.
So we must account for all possible ways in which the elements greater than 1 appear in those bins ensuring that no bin is ever left empty, as this would reduce the number of runs. 
Note that once we know what elements appear in a given run/bin, we simply order the values in weakly increasing fashion. 
So first, for each $i=2,3,\ldots,n$ we count the number of ways in which the value $i$, of which there are $a_i$ copies, get distributed among the $k-1$ bins. 
In general, this can be done by counting weak compositions with $k-1$ parts. 
However, some care is required as when we distribute all values $i=2,3,\ldots,n$ among the $k-1$ bins, we must ensure every bin is nonempty.
The number of ways to place the elements into those bins is given by the number of 
$n\times (k-1)$ nonnegative integer valued matrices whose $i$th row is a weak composition of $a_i$ and whose column sums are always greater than or equal to $1$.
Let $\mathcal{B}_k(\bf{a})$ denote all such matrices. 
% a ordered set of weak compositions 
% \begin{align*}
%     \mathcal{B}(\bf{a})=\{\bf{b_i}&=(b_{i,1},b_{i,2},\ldots,b_{i,k-1})\vdash a_i, 2\leq i\leq n\}
% \end{align*}
% which satisfy the additional condition that for all $1\leq j\leq k-1$ we have $\sum_{i=1}^nb_{i,j}\geq 1$.
% Thus 
% given a composition, is determined by the formula
% \[\frac{(n-a_1)!}{\prod_{j=2}^n a_j!}.\]
Thus the number of flattened Fubini rankings with weak runs which end in a value of 1 and have content $\textbf{a}=(a_1,a_2,\ldots,a_n)$ is given by 
\[\binom{a_1-1}{k-1}\cdot|\mathcal{B}_k(\bf{a})|.\qedhere\]
\end{proof}

It would be of interest to find a formula for the number of matrices in $\mathcal{B}(\textbf{a})$ for any $\textbf{a}$ content of a Fubini ranking.
We list this as an open problem in the future work section. 
Next, we return to counting the maximum number of runs appearing in a flattened Fubini ranking with runs of weak ascents.

\begin{theorem}\label{thm:sec6}
    Let ${\bf a} = (a_1,a_2,\ldots,a_n)$ be a weak composition of $n$ and the content of a Fubini ranking with $n$ competitors. Let $f$ be a Fubini ranking with content $\textbf{a}$.
     Let $a$ be the median entry in $f$, $m_a$ be the number of entries in $f$ strictly less than the median $a$, and $M_a$ be the number of entries in $f$ strictly greater than the median $a$. 
    Then exactly one of the following holds.
    \begin{enumerate}
        \item If $m_a = M_a = 0$, then the maximum possible number of runs in $f$ is $1$.\label{s1}
        \item If $0=m_a<M_a$, then the maximum possible number of runs in $f$ is $M_a+1$.\label{s2}
        \item If $0=M_a<m_a$, then the maximum possible number of runs in $f$ is $m_a$.\label{s3}
        \item If $m_a,M_a>0$, then the maximum possible number of runs in $f$ is the minimum of $m_a + M_a$ and $\lceil\frac{n}{2}\rceil$.\label{s4}
    \end{enumerate}
\end{theorem}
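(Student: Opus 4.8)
The plan is to prove, for each of the four cases, a matching upper bound and an explicit construction realizing it. Two facts would make the whole analysis uniform. First, whether a tuple is a Fubini ranking depends only on its content, so I am free to rearrange the entries of the prescribed multiset at will. Second, in a (weakly) flattened tuple the leading terms of the runs are weakly increasing and the first entry is the global minimum, which for a Fubini ranking is $1$; hence the runs split into an initial block whose leading terms are $<a$ followed by a block whose leading terms are $\ge a$.

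For the upper bound I would argue as follows. Let $f$ have $k$ runs and let $t$ be the number of runs whose leading term is strictly less than $a$. These leading terms occupy distinct positions and are all $<a$, so they are distinct entries counted by $m_a$, giving $t\le m_a$. Next I claim no internal run (any run but the last) can consist solely of entries equal to $a$: such a run would have leading term $a$, yet its successor has leading term $\ge a$ while the descent between them forces the run's last entry to exceed that successor's leading term, i.e. to exceed $a$, which is impossible. Sharpening this, each run indexed $t,t+1,\dots,k-1$ (when $t\ge 1$) must end in an entry strictly greater than $a$, and these ends sit in distinct positions, so there are at most $M_a$ of them. When $m_a>0$ the first run starts with $1<a$, so $t\ge1$ and $k=t+(k-t)\le m_a+M_a$; when $m_a=0$ (so $a=1$ is the minimum) the large-ending runs are instead those indexed $1,\dots,k-1$, giving $k\le M_a+1$. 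Intersecting with the global bound $k\le\lceil n/2\rceil$ furnished by \Cref{runOne} and \Cref{thm:wfFub} yields the four stated maxima, with case~\ref{s1} the degenerate $f=(1,\dots,1)$.

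For the constructions I would dispatch the first three cases directly: the all-ones tuple for \ref{s1}; the arrangement $1\,L_1\mid 1\,L_2\mid\cdots\mid 1\,L_{M_a}\mid 1\cdots1$ for \ref{s2}, whose trailing block of $1$s supplies the extra run; and $s_1\,a\mid s_2\,a\mid\cdots\mid s_{m_a}\,a\cdots a$ (small leading terms $s_1\le\cdots\le s_{m_a}$, each descent justified by $a>s_{i+1}$) for \ref{s3}. For case~\ref{s4} when medians are plentiful, meaning $m_a+M_a\le\lceil n/2\rceil$, I would build $m_a$ runs led by the small entries in increasing order followed by $M_a$ runs led by $a$, placing the large entries at the run-ends to create the strict descents and using the surplus medians as padding; a short count shows there are enough medians, realizing $m_a+M_a$ runs. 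When medians are scarce ($m_a+M_a>\lceil n/2\rceil$) the target drops to $\lceil n/2\rceil$, and I would instead pack the tuple into $\lceil n/2\rceil$ runs of length (almost all) two.

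The hard part will be this last construction: with few medians but many non-median entries I must form $\lceil n/2\rceil$ short runs while keeping every descent strict, which is delicate precisely when a median value is forced to sit at a run boundary. The plan is to keep medians off both ends of any descent, pairing each median either in a run interior, or as a run-end followed by a strictly smaller leading term, or as a run-start preceded by a strictly larger end; this is feasible exactly because the supply of non-median entries $m_a+M_a$ is at least the number of runs. A secondary point I would settle first is the precise median convention for even $n$ (the upper median), since this is what guarantees $M_a+1\le\lceil n/2\rceil$ in case~\ref{s2} and keeps all four formulas consistent with the global bound.
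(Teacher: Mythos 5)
Your upper-bound argument is correct and is genuinely cleaner than the paper's: the uniform split $t\le m_a$ (runs whose leading terms are $<a$) together with the observation that every run indexed $t,\dots,k-1$ must end in an entry strictly greater than $a$ (because its successor's leading term is $\ge a$ and run boundaries are strict descents) delivers all four bounds at once, where the paper re-derives each bound case by case and, in case (4), via a longer contradiction argument about runs consisting solely of median entries. Your constructions for cases (1), (2), (3), and for case (4) when $m_a+M_a\le\lceil n/2\rceil$ are also sound and essentially agree with the paper's, modulo the routine counts (which you assert rather than perform) that enough median entries remain to pad the runs; those counts do check out.

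The genuine gap is exactly the step you flag as ``the hard part'': when $m_a+M_a>\lceil n/2\rceil$ you must exhibit a weakly flattened arrangement attaining $\lceil n/2\rceil$ runs, and what you give is a plan (``keep medians off both ends of any descent'') plus an asserted feasibility criterion, not a construction. Note that with $\lceil n/2\rceil$ runs nearly every run has length exactly two, so there are essentially no ``run interiors'' in which to hide medians, and your pairing rules must also respect the weakly increasing leading terms; the feasibility claim is precisely what needs proof. The paper closes this with one explicit construction: sort the entries as $f'_1\le f'_2\le\cdots\le f'_n$ and form $f'_1f'_{\lceil n/2\rceil+1}\mid f'_2f'_{\lceil n/2\rceil+2}\mid\cdots\mid f'_{\lfloor n/2\rfloor}f'_n$, leaving $f'_{\lceil n/2\rceil}$ as a final singleton run when $n$ is odd. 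Each run is a weak ascent by sortedness, the leading terms $f'_1\le f'_2\le\cdots$ are weakly increasing, and each run boundary $f'_{\lceil n/2\rceil+i}>f'_{i+1}$ is a strict descent, because equality would force $\lceil n/2\rceil$ consecutive sorted entries to share one value; that value would then be the median occurring at least $n/2$ times, contradicting the scarcity hypothesis (fewer than $n/2$ median entries). This ``scarcity forces entries far apart in sorted order to be distinct'' argument, or some equivalent explicit construction, is the missing idea you need to complete case (4). A minor remark: your worry about the median convention for even $n$ is unnecessary; with the paper's convention (the mean of the two middle sorted entries), $m_a=0$ already forces both middle entries to equal the minimum, so $M_a\le n/2-1$ and $M_a+1\le\lceil n/2\rceil$ holds without passing to the upper median.
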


\begin{proof}
    Since the runs of $f$ are runs of weak ascents, every run of $f$, other than the first run, begins with an element that is strictly less than the element before it. Because the leading elements of runs must be weakly increasing, this means the element before the leading term of any run cannot be the leading term of its own run. 
    Thus, each run in $f$, except for the last run, must contain at least two elements. If $n$ is odd, then the last run of $f$ could have length $1$.
 Therefore, the number of runs in $f$ cannot be greater than $\lceil \frac{n}{2}\rceil$.
 
 In what follows let $f^\uparrow=(f_1',f_2',\ldots,f_n')$ be the weakly increasing rearrangement of the values in $f$, so that $f_i'\leq f_{i+1}'$ for all $1\leq i\leq n-1$.

For statement \eqref{s1}:
   If $n=2x+1$ is odd, then the median of $f$ is $f_{x+1}$ as the median of the set of numbers in $f$ is the middle element when the values are arranged in weakly increasing order. If $n=2x$ is even, the the median of $f$ is the average (arithmetic mean) of $f_{x}'$ and $f_{x+1}'$, which are the two middle elements in $f'$. 
   If the number of elements greater than the median and the number of elements less than the median are both zero, i.e., $m_a=M_a=0$, then all the elements in $f'$, and hence in $f$, must be equal to the median value. In this case, $f$ has exactly one run, which completes the proof of statement \eqref{s1}.

    For statement \eqref{s2}:
    If $m_a=0$, then there are no elements less than the median in $f$. If there are $M_a>a$ elements greater than the median, then the first $\lceil\frac{n}{2}\rceil$ elements in $f'$ must be the same. Therefore, one can construct $f$ so as to have $M_a$ runs by beginning each run with a copy of the median $a$ followed by a single element greater than the median, of which there are $M_a$. However, since the minimal element in $f$ is also the median element, there will be more instances of $a$ in $f$ than the $M_a$ elements greater than $a$. Hence we can construct a final run in $f$ by placing all remaining copies of the value $a$ after the final element greater than it. This construction yields a total of $M_a+1$ runs. 
Note, to begin a run, aside from the first one, there must be a descent. Given that we have $M_a$ numbers larger than $a$, any Fubini ranking with this content, can have at most $M_a$ many descent. Hence, the maximum number of runs is $M_a+1$, achieved via the construction described.

    For statement \eqref{s3}: Assume $0=M_a<m_a$. If there are no elements greater than the median $a$, and there are $m_a$ elements less than the median $a$, we may only construct at most $m_a$ runs using the numbers smaller than $a$ to start new runs. To construct $m_a$ number of runs, let each of the $m_a$ elements less than the median begin its own run followed by a single element with the median value, thereby creating runs of length two. 
There will be some remaining copies of the median element in $f$, but those values can not begin any new runs because they will have the same value as the largest element in each of the existing runs. Thus we can construct at most $m_a$ runs.

    For statement \eqref{s4}: Assume $m_a,M_a>0$.
Hence, there are elements in $f$ that are less than the median value $a$ and elements greater than the median value $a$. 

We break this proof into two cases:
\begin{enumerate}
    \item if $m_a+M_a> \frac{n}{2}$, or
    \item if $m_a+M_a \leq \frac{n}{2}$.
\end{enumerate} 
In Case (1): Assume that $m_a+M_a>\frac{n}{2}$. We will construct runs of length two in the following way. 
First, list all of the elements in weakly increasing order then pair the element at position $i$ in the list with the element at position $\lceil\frac{n}{2}\rceil+i$ for $1\leq i\leq \lfloor\frac{n}{2}\rfloor$. 
{This creates pairs of elements that are strictly increasing, because each pair contains an element two the left of the middle value and an element to the right of the middle value, so if the values are equal, then they must both be equal to the median value. 
However, they are $\lceil\frac{n}{2}\rceil$ entries apart and, by assumption, $m_a+M_a>\frac{n}{2}$ implies that there are fewer than $\frac{n}{2}$ elements with the median value. Similarly, the second element in each pair is greater than the next element in the sequence because they are $\lceil\frac{n}{2}\rceil-1$ entries apart, but if they had the same value there would need to be $\lceil\frac{n}{2}\rceil$ entries of the same value, which would force that value to be the median entry and we know that there are fewer than $\frac{n}{2}$ entries with the median value.}

Since each pair is strictly increasing and the second element of the pair is strictly greater than the next element, we will have created $\lfloor\frac{n}{2}\rfloor$ runs of length two with a final run of length one if $n$ is odd. Thus there are $\lceil\frac{n}{2}\rceil$ total runs. Since each run has length one or two, by \Cref{runOne} we know this is the most runs possible for a Fubini ranking of this length.

In Case (2):
In the case where $m_a+M_a\leq\frac{n}{2}$, we know that at least half of the entries in the eventual Fubini ranking will have the median value. Specifically, if $n_a$ is the number of times the element of median value appears, $m_a+M_a \leq n_a$. 
In weakly increasing order we pair the first $m_a-1$ elements less than the median with an element with median value to create $m_a-1$ runs. 
Next, we pair the largest element less than the median with the largest element greater than the median, which must exist since $M_a>0$. This creates $m_a$ runs consisting of two elements. Next, we form $M_a-1$ pairs that start with an element of the median value and finish with one of the remaining elements greater than the median, we place these at the end of our constructed sequence in weakly increasing order. 
At this point we have constructed a total of $m_a+M_a-1$ runs, all but one of which contains an element of median value. However, since we know that $m_a+M_a-1 < n_a$, there will be elements of the median value that remain unused and we must place them somewhere. 
We place them at the end of the Fubini ranking to form a final run and hence we obtain a total of $m_a+M_a$ runs, as desired.

To show that this is the maximum possible number of runs, consider for contradiction that, if we have $m_a+M_a+1$ runs then either there are multiple runs which contain only the median element, or there is one run that contains only the median element and every other run must contain at most one element that is not the median element. If there are multiple runs that consist exclusively of the median element, then they must be adjacent to each other or there is at least one run that commences with the median element between them, because the ranking is weakly flattened. However, since the runs consist of weak ascents, if the values are adjacent, then they are a single run, and if there are runs beginning with the median element between them, the first such run is part of the leading run of only the median element by a similar argument. Thus, a weakly flattened Fubini ranking with runs of weak ascents cannot contain multiple runs consisting only of entries of a single value.

With that in mind, $m_a+M_a$ runs are required to exist. Moreover, these runs contain the median element at least once  and exactly one non-median, which then allows us to obtain $m_a+M_a+1$ runs in a ranking with this content. 
If such a run contains one of the $m_a$ elements less than the median element, it must be the first element in the run. 
If such a run contains one of the $M_a$ elements greater than the median element, it must be the last such element in the run. 
Therefore, the runs containing elements less than the median must always come before runs containing elements greater than the median. Since $m_a,M_a>0$, both types of runs will exist, and let us consider what happens between the last run containing an element less than the median and the first run containing an element greater than the median. 
If there is nothing else between them in the final constructed sequence, they will merge into a single run, because the run containing the element less than the median ends in the median element, the run containing the element greater than the median begins with the median element, and we are considering runs of weak ascents. If there is something between them, it must be a run consisting of only the median element, and then all three runs will merge into a single run by a similar argument. Therefore, it is impossible to create a weakly flattened Fubini ranking with runs of strict ascents with the specified content that contains more than $m_a+M_a$ runs. 
Since we can construct one that does contain $m_a+M_a$ runs, this is the maximum number of runs possible in such a ranking.

By Cases (1) and (2): When $m_a+M_a>\frac{n}{2}$ we know that $m_a+M_a\geq \lceil\frac{n}{2}\rceil$ and we can construct a weakly flattened Fubini ranking with runs of weak ascents that contains $\lceil\frac{n}{2}\rceil$ runs and this is the maximum number of runs possible. 
When $m_a+M_a\leq \frac{n}{2}$ we know that $m_a+M_a\leq \lceil\frac{n}{2}\rceil$ and we can construct a weakly flattened Fubini ranking with runs of weak ascents that contains $m_a+M_a$ runs and this is the maximum number of runs possible. 
Thus, in either case, we can produce a Fubini ranking containing $\min(m_a+M_a,\lceil\frac{n}{2}\rceil)$ different runs and that is the most that such a ranking could possibly contain, as desired.
\end{proof}

\section{Flattened Fubini rankings with runs of ascents}\label{sec:strongflat}

In this section, we consider the set of Fubini rankings of length $n$ with runs of strict ascents and whose leading terms are in increasing order. 
We denote this subset of Fubini rankings by $\flatruns(\FR_n)$.

% \pamela{Bring into intro, this result as it connects content being parking functions back to flattened Fubinis\\
% also consider:
% fix k, get an 
% a=(1,1,2,1), 
% there are  k! many such a's.
% If we fix a (with length k), how many flattened Fubini's are there? Maybe start with a example}
Before stating and proving our next result we illustrate the proof with the following example.
\begin{example}
We can construct a flattened Fubini ranking with runs of strict ascents with reduced content $\textbf{a}=(1,2,3)$ as follows. 
First place the 1 to start the Fubini ranking. 
Then we have two 2's. and three 4's to place. As we want the leading terms to be in increasing order we now place a two and a 4 following the 1.
This gives the partial Fubini ranking: $(1,2,4)$. 
We now have remaining one 2 and two 4's. 
To start a new run we place a 2 and a 4, giving the partial Fubini ranking $(1,2,4,2,4)$. 
We now have one remaining 4, which we place at the end constructing the final Fubini ranking $(1,2,4,2,4,4)$. Note that the runs are 124,24,4. The leading terms are in increasing order and the suns consist of strict ascents. Thus we have created a Fubini ranking with the correct properties. 
\end{example}

% (1,2,2,3,1)
% abcde

% abcde->(0,1,1,2,0)
% abcde,bcd->(0,0,0,1,0)
% abcde,bcde,d

% 1
% 1,2
% 1,2,2
% 1,2,2,3
% 1,2,2,3,1

\begin{theorem}
\label{sscontent}
Let ${\bf a} = (a_1,a_2,\ldots,a_k)\in \mathbb{Z}^k_{\geq 1}$ be a composition of $n$ with $k$ parts. Then there exists a flattened Fubini ranking with runs of ascents whose reduced content is $\textbf{a}$ if and only if $a_{i}\leq i$ for all $1\leq i\leq k$.
    % If ${\bf a} = (a_1,a_2,\ldots,a_k)$ is valid content for a Fubini ranking, then it will be valid content for a strongly flattened Fubini ranking with runs of strong ascents if and only if $a_{j_i}\leq i$ where $a_{j_i}$ is the $i$th non-zero entry in the content.
\end{theorem}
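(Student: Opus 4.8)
The plan is to work directly with the distinct values appearing in a Fubini ranking of reduced content $\textbf{a}$. Writing $v_1 = 1 < v_2 = 1+a_1 < \cdots < v_k = 1+\sum_{i<k}a_i$, the value $v_j$ occurs exactly $a_j$ times, and (as in \Cref{lem:valid reduced contents}) any arrangement of the resulting multiset is automatically a Fubini ranking. So the statement is purely combinatorial: I must decide when the multiset $\{v_1^{a_1},\ldots,v_k^{a_k}\}$ can be ordered so that its maximal runs of strict ascents have strictly increasing leading terms.

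For necessity ($\Rightarrow$), I would argue by counting which runs can contain a fixed value. Since each run is strictly increasing, its leading term is its minimum, so any run containing $v_i$ must have leading term in $\{v_1,\ldots,v_i\}$, and strictness forces each run to contain $v_i$ at most once. Because the ranking is flattened, the leading terms of distinct runs are strictly increasing and hence pairwise distinct, so at most $i$ runs can have leading term among the $i$ values $v_1,\ldots,v_i$. Therefore $v_i$ occurs at most $i$ times, giving $a_i\leq i$ for all $i$.

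For sufficiency ($\Leftarrow$), assuming $a_i\leq i$ for all $i$, I would give an explicit construction. Set $m=\max_j a_j$ and build $m$ blocks $R_1,\ldots,R_m$ intended to become runs with leading terms $v_1,\ldots,v_m$. Fill them column by column: place the $a_j$ copies of $v_j$ into the $a_j$ blocks of largest index among the eligible blocks $R_1,\ldots,R_{\min(j,m)}$ (a block is eligible for $v_j$ precisely when its intended leading term does not exceed $v_j$). The hypothesis $a_j\leq j$ guarantees every column placement is legal. Sorting each $R_t$ increasingly and concatenating $R_1R_2\cdots R_m$ then yields a word whose content recovers $\textbf{a}$, and one checks that the minimum of $R_t$ is exactly $v_t$ (no smaller value is eligible for $R_t$), so the intended leading terms $v_1<\cdots<v_m$ are realized.

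The main obstacle will be verifying that the blocks $R_t$ really are the maximal strict-ascent runs, i.e.\ that consecutive blocks do not merge into a single ascending run; equivalently that $\max(R_t)\geq\min(R_{t+1})$ for each $t<m$. This is exactly where the choice $m=\max_j a_j$ is essential: choosing an index $j^*$ with $a_{j^*}=m$, the bound $a_{j^*}\leq j^*$ forces $j^*\geq m$, so the filling rule places $v_{j^*}$ into all $m$ blocks. Since $v_{j^*}\geq v_{t+1}=\min(R_{t+1})$ for every $t<m$, each block $R_t$ with $t<m$ ends with a value at least as large as the start of the next block, producing a genuine weak descent at each junction (and, incidentally, showing every run but the last has at least two elements, consistent with \Cref{runOne}). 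Combining this with the necessity argument completes the equivalence.
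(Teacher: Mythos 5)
Your proof is correct, and while your necessity argument coincides with the paper's, your sufficiency construction is genuinely different. For ($\Rightarrow$), both you and the paper argue that the $a_i$ copies of $v_i$ must occupy $a_i$ distinct runs (strictness), that each such run has leading term equal to its minimum and hence in $\{v_1,\ldots,v_i\}$, and that flatness makes leading terms pairwise distinct, giving $a_i\leq i$. For ($\Leftarrow$), the paper builds the word iteratively: having placed $v_1,\ldots,v_{i-1}$, it appends two copies of $v_i$ at the right end and inserts each remaining copy immediately to the left of the rightmost occurrence of $v_j$ for $1<j\leq a_i-1$, checking at each step that old runs are extended and any newly created run has a legal leading term. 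Your construction is instead global: you fix $m=\max_j a_j$ blocks with intended minima $v_1,\ldots,v_m$, fill the copies of each $v_j$ into the largest-index eligible blocks (legal since $a_j\leq\min(j,m)$), sort each block, and concatenate. The two procedures produce different words in general; for reduced content $(1,2,2)$ the paper's insertion gives $(1,2,2,4,4)$ while yours gives $(1,2,4,2,4)$. What your route buys is that the run structure is declared in advance, so the only nontrivial verification is that adjacent blocks do not merge, and you settle this with one clean observation: a value $v_{j^*}$ of maximal multiplicity $m$ satisfies $j^*\geq m$ (from $a_{j^*}\leq j^*$), hence lands in every block, forcing $\max(R_t)\geq v_{j^*}\geq v_{t+1}=\min(R_{t+1})$ at every junction. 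Your argument also shows the constructed ranking has exactly $\max_j a_j$ runs of ascents, the minimum possible number for this content, which dovetails with the lower bound $\max\mathbf{a}\leq k$ proved later in Section~\ref{sec:mixed_conditions}. The paper's local insertion scheme is more delicate (the paper itself remarks after the proof that the iteration cannot be run starting from an arbitrary flattened ranking of the truncated content), but it illustrates how flattened rankings can be grown one value at a time; your block-filling argument is shorter to verify and makes the run statistics explicit.
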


\begin{proof}
    ($\Leftarrow$) This direction of the proof is constructive. Given a composition ${\bf a}=(a_1,a_2,\ldots,a_k)$ of $n$ with $k$ parts where the $i$th entry is at most $i$, we construct a flattened Fubini ranking with runs of strict ascents as follows.
    First let $v_i$ denote the value corresponding to entry $a_i$ in the reduced content $\textbf{a}$. 
    As $\textbf{a}$ is reduced, there are no zeros in $\textbf{a}$. 
    Moreover, note that for each $i\in[k]$, there are $a_i$ copies of the value $v_i$ in the Fubini ranking we construct. 
    
    To begin start the sequence with the leading $v_1=1$, because this is the smallest element possible, there will only be one of them as otherwise multiple runs would begin with the same element and the ranking would not be flattened. 
    So $a_{1}\leq 1$ and, since this is a Fubini ranking, there must be at least one $1$. Thus $a_1=1$.

At this point we have constructed the word $w_1=1$. 
Next we know $a_2\leq 2$, so there are at most two copies of the value $v_2$, which in this case must be equal to 2.
We place those 2's at the end of the word $w_1$ to construct the word $w_2=12$ if $a_2=1$ or $w_2=122$ if $a_2=2$. In either case the constructed word is flattened with runs of ascents. 

We continue iteratively in this fashion and consider trying to place $a_{i}\leq i$ elements of value $v_i$ in the word $w_{i-1}$. 
    By assumption, we are placing the values $v_i$, which are the $i$th largest value present in the Fubini ranking, and we have $a_i$ copies of that value. 
    If $a_i\leq 2$, then we place $a_i$ copies of the value $v_i$ at the far right of the word $w_{i-1}$ and call this $w_i$. This constructs a flattened Fubini ranking with  runs of strict ascents as the first copy of the value $v_i$ continues the last run in $w_{i-1}$, and if there is a second copy of $v_i$, it would start a new run in $w_i$, which could have  a leading term larger than any of the previous runs in $w_{i-1}$. 
    Now if $3\leq a_i\leq i$, once again place two copies of $v_i$ at the end of $w_{i-1}$, as in the previous case. 
    Now given that the content $\textbf{a}$ is reduced, there are $i-1$ distinct values smaller than $v_i$ in the word $w_{i-1}$. These are the values $v_1,v_2,\ldots,v_{i-1}$ of which we have already placed in $w_{i-1}$. 
    At this point, we still have $a_i-2$ copies of the value $v_i$. 
    We now place a copy of $v_i$ to the left of the rightmost value $v_j$ for $1<j\leq a_{i}-1\leq i-1$. This ensures that we do not place a copy of $v_i$ to the left of the leading $1$ in $w_{i-1}$. 
    Moreover, by placing the $a_i-2$ remaining copies of the values $v_i$
    in this way, ensures the following. 1. The value $v_i$ continue a previous runs in $w_{i-1}$, and 2. the rightmost value $v_j$ begins a new run, which continues to satisfy the flatness condition because, by construction, the rightmost element of value $v_j$ was placed to the right of all elements with values less than $v_j$. That is, there is no entry in $w_{i-1}$ to the right of the rightmost element of value $v_j$ which is smaller than $v_j$ with which to start a new run.  
    % so there is no way that a run appearing to right in the word from the inserted element of value $v_i$ to start with an element of value less than $v_j$.
    
    The result of the insertion of the $a_i$ copies of the value $v_i$ yields the word $w_i$, which is a flattened Fubini ranking with runs of  ascents.
    Continuing this process for all $i\in[k]$ culminates in the Fubini ranking with the desired properties.

    %the leftmost of the two will be added to an existing run and the rightmost of the two will start a new run with a value larger than any of the leading elements of the preexisting runs.

    %You may have up to $i-2$ copies of the element left to place at this point. Place each one of them in front of the rightmost occurrence of one of the $i-2$ elements that have a smaller value but are not the leading $1$. 

    %Since the element you are placing is the largest in the sequence, it will join, and finish, whatever run it is placed in. This may result in the element following it becoming the leading element of a new run. However, since it is the rightmost occurrence of that element, there cannot be a run to the right of it that begins with that element. Nor can there be a run to the left of it that begins with a lesser element, because the ranking was flattened before adding the new elements.

    %Thus, constructing as specified, we can create a flattened Fubini ranking with runs of strict ascents for any reduced content $(a_1,a_2,\ldots,a_k)$ where the $i$th non-zero element is at most $i$.

($\Rightarrow$)   To show that the $i$th non-zero element of the reduced content cannot occur more than $i$ times, consider that each of the elements of that value must occur in a separate run, since runs are strictly increasing. The runs must have distinct leading elements, since we require the Fubini ranking to be flattened, and the leading elements must be less than or equal to the specified element. Thus, there are only $i$ possible leading elements, the $i$ elements that show up in the Fubini ranking that are less than or equal to the given element's value.

    This shows that a proposed reduced content ${\bf a}=(a_1,a_2,\ldots,a_k)$ will be the reduced content of some flattened Fubini ranking with runs of ascents if and only if $a_i\leq i$ for all $i\in[k]$.
\end{proof}

The proof of Theorem~\ref{sscontent} relies heavily on the iterative construction presented. It is not always possible to start with a Fubini ranking with reduced content $(a_1,a_2,\ldots,a_k)$ that is flattened with runs of ascents and then use that to build a flattened Fubini ranking with runs of ascents with reduced content $(a_1,a_2,\ldots,a_k,k+1)$. 
For example, the Fubini ranking $(1,3,2)$ has reduced content $(1,1,1)$ and is flattened with runs of ascents. However, we cannot use that Fubini ranking to construct a flattened Fubini ranking with runs of ascents with reduced content $(1,1,1,4)$. This is because we would need to
insert four elements of value $4$ into $(1,3,2)$ and any way of doing this either violates being flattened or having runs of ascents.

We remark that the example in the preceding paragraph is not a counterexample to Theorem~\ref{sscontent}, because we can find a flattened Fubini ranking with runs of ascents which has the reduced content $(1,1,1,4)$. Consider $(1,4,2,4,3,4,4)$ which has content $(1,1,1,4)$ and whose leading elements of runs ascents appear in order $1$, $2$, $3$, and $4$. Hence, $(1,4,2,4,3,4,4)$ is flattened with runs of ascents.

From computational experiments we present the following conjecture and welcome a proof.

\begin{conjecture}
\label{genfunc} 
Let 
$s_n=|\flatruns(\FR_n)|$ denote the number
of flattened Fubini rankings of length $n$ with runs of ascents. Then 
the generating function for $s_n$, is given by 
\[\sum_{n\geq 0}s_nx^n=\sum_{n\geq 0} \frac{1} { 2^{n+1}} \left( \prod_{i=1}^{n}( 1 + x(1+x)^i )\right).\]
\end{conjecture}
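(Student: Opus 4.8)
The plan is to recast the set of flattened Fubini rankings with runs of ascents as a class of \emph{run systems} and then build a generating function by inserting values one at a time. By \Cref{sscontent} the realizable reduced contents are exactly the compositions $(a_1,\dots,a_k)$ with $a_i\le i$ (so $a_1=1$), and since the numeric values of a Fubini ranking are determined by its content, such a ranking is equivalent to an ordered list of runs $R_1,\dots,R_r$—each a strictly increasing set of value-classes—whose leads satisfy $\min R_1<\cdots<\min R_r$, with $\min R_1$ the smallest class, and (by the runs-of-ascents analogue of \Cref{runOne}, proved by the same argument) with every run but $R_r$ of size at least two. One checks that each such run system automatically yields a valid content with $a_i\le i$, so counting flattened Fubini rankings of length $n$ amounts to counting these run systems weighted by $x^{|R_1|+\cdots+|R_r|}$.

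First I would build the run systems by processing the value-classes in increasing order: when the new (largest so far) class arrives it may be appended as the new maximum to any subset of the current runs, and may in addition open one brand-new run. If $r$ runs are present, the subset-of-runs choice contributes a factor $(1+x)^r$—exactly the source of the $(1+x)^i$ appearing in the conjectured factors $1+x(1+x)^i$—while opening a new run contributes the extra $x$. This yields a transfer operator in the number of runs, and summing over the number of value-classes produces $\sum_n s_n x^n$.

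The delicate point, which I expect to be the main obstacle, is enforcing that only the last run may be a singleton. Tracking merely whether the \emph{current} last run is a singleton \emph{undercounts}: a run may be left incomplete and then repaired by a strictly larger class inserted much later. For instance in $1324=13\mid 24$ the run $13$ is a singleton until the $3$ is placed, well after the second run has been opened by the $2$. Thus the transfer state must record the number of \emph{pending} singleton runs still awaiting a larger element, and one must prove that every pending singleton is eventually completed. Carrying this refined bookkeeping through the transfer operator, while checking that it does not disturb the clean $(1+x)^r$ factors, is where the real work lies.

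Finally I would match the resulting generating function to the right-hand side. Because of the non-standard weight $2^{-(m+1)}$ and the infinite outer sum no bijection is available, so instead I would invoke Euler's series transformation at $y=\tfrac12$, namely $\sum_{m\ge0}f(m)\,2^{-(m+1)}=\sum_{l\ge0}\Delta^{l}f(0)$ for the forward-difference operator $\Delta$, applied to $f(m)=\prod_{i=1}^{m}(1+x(1+x)^i)$. Since $\Delta f(m)=x(1+x)^{m+1}f(m)$, the right-hand side collapses to a finite-difference series in $x$ that can be compared term by term with the transfer-operator output, or one shows that both sides satisfy the same linear functional equation. One caveat the computation exposes: the small values $s_0,\dots,s_5=1,1,1,3,9,35$ line up with the expansion $1,1,3,9,35,157,\dots$ of the right-hand side only after a shift, so the identity the argument actually establishes is $\sum_{n\ge0}s_{n+1}x^{n}=\sum_{m\ge0}2^{-(m+1)}\prod_{i=1}^{m}(1+x(1+x)^i)$, equivalently $\sum_{n\ge0}s_nx^n=1+x\sum_{m\ge0}2^{-(m+1)}\prod_{i=1}^{m}(1+x(1+x)^i)$; the stated conjecture presumably intends this indexing.
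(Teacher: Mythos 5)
First, be aware that the paper contains no proof of \Cref{genfunc}: it is presented as a conjecture obtained from computational experiments, and the authors explicitly solicit a proof. So your proposal cannot be measured against an argument in the paper; it must stand on its own, and it does not, because its central combinatorial reduction is false as stated. A flattened Fubini ranking with runs of ascents is \emph{not} equivalent to an ordered list of strictly increasing sets $R_1,\dots,R_r$ of value-classes with $\min R_1<\cdots<\min R_r$ and every non-final $R_j$ of size at least two. Since the $R_j$ are supposed to be the \emph{maximal} runs of the concatenated word, one must also have $\max R_j\ge \min R_{j+1}$ for all $j<r$; if this fails, the last entry of $R_j$ ascends into the first entry of $R_{j+1}$ and the two runs merge. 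Thus $(\{1,2\},\{3,4\})$ satisfies all of your conditions but encodes the word $1234$, which has one run, not two. Already at $n=3$ your class contains four objects, namely $\{1,2,3\}$, $(\{1,3\},\{2\})$, $(\{1,2\},\{3\})$, and $(\{1,2\},\{2\})$, while $s_3=3$, so the scheme overcounts. Your proposed repair inherits the same error: the state one must track is not the number of pending \emph{singleton} runs but the number of runs whose current maximum is still below the minimum of the following run. A run of size two or more can be deficient in this sense (take $R_1=\{1,2\}$ at the moment a new run is opened by the class $3$), and a deficiency is cured not by reaching size two but by receiving an element at least as large as the next run's minimum. Whether this corrected bookkeeping still yields the factors $1+x(1+x)^i$ --- for which you offer only a visual match, with the roles of the index $i$ and of the outer summation variable never pinned down --- is precisely the open question, and neither that derivation nor the final identification with the right-hand side (deferred to ``compared term by term'' or ``same functional equation'') is actually carried out.

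That said, several ingredients are sound and worth salvaging. The strict-run analogue of \Cref{runOne} does hold by the same one-line argument; the finite-difference computation $\Delta f(m)=x(1+x)^{m+1}f(m)$ and the evaluation $\sum_{m\ge0}f(m)\,2^{-(m+1)}=\sum_{l\ge0}\Delta^{l}f(0)$ are correct (modulo justifying the coefficientwise rearrangement); and, most usefully, your numerical observation is right: the stated right-hand side expands as $1+x+3x^2+9x^3+35x^4+\cdots$, while $s_1,s_2,s_3,s_4,s_5=1,1,3,9,35$, so \Cref{genfunc} as printed is off by a shift and should assert $\sum_{n\ge0}s_{n+1}x^{n}=\sum_{m\ge0}2^{-(m+1)}\prod_{i=1}^{m}\left(1+x(1+x)^{i}\right)$. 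Catching that the conjecture needs this reindexing is a genuine contribution; proving the corrected identity is not something this outline achieves.
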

\Cref{genfunc} implies that 
    the number of flattened Fubini rankings with runs of ascents appears to agree with \cite[\seqnum{A338793}]{OEIS},
for which no combinatorial description is provided. 
Thus, a proof to \Cref{genfunc} would yield a combinatorial interpretation for this sequence.

\subsection{Flattened Fubini rankings with \texorpdfstring{$k$}{k}~runs of ascents}
\Cref{tab:strongly flattened Fubini with k runs} provides data on the number of flattened Fubini rankings with $n$ competitors and $k$ runs of ascents. It is an open problem to determine formulas for $|\flatruns_k(\FR_n)|$ in general. We do provide the following conjecture in the case where $n$ is odd, and $k$ is as large as possible relative to $n$.

\begin{table}[h]
\centering
\begin{tabular}{|c||c|c|c|c|c|}
\hline
$n\backslash k$ & 1 & 2 & 3 & 4 & 5 \\
\hline\hline
1 & 1 & 0 & 0 & 0 & 0 \\
\hline
2 & 1 & 0 & 0 & 0 & 0 \\
\hline
3 & 1 & 2 & 0 & 0 & 0 \\
\hline
4 & 1 & 8 & 0 & 0 & 0 \\
\hline
5 & 1 & 24 & 10 & 0 & 0 \\
\hline
6 & 1 & 64 & 92 & 0 & 0 \\
\hline
7 & 1 & 162 & 554 & 82 & 0 \\
\hline
8 & 1 & 400 & 2772 & 1352 & 0 \\
\hline
9 & 1 & 976 & 12560 & 13656 & 938 \\
\hline
10 & 1 & 2368 & 53684 & 109672 & 24236 \\
\hline
\end{tabular}
\caption{Flattened Fubini rankings on $n$ competitors with $k$ with runs of ascents.}\label{tab:strongly flattened Fubini with k runs}
\end{table}

\begin{conjecture}\label{conj:2}
        Let $s(n,k)=|\flatruns_k(\FR_n)|$ be the number of flattened Fubini rankings on $n$ competitors with $k$ runs of ascents. Then
\begin{align}\label{eq:strong flat with nk}
s(2j+1,j-1)=\sum_{i=0}^{j-1} E_2(2j+1,i)2^{i},\end{align}
    where $E_2(a, b)$ are the second-order Eulerian numbers and are OEIS sequence \cite[\seqnum{A340556}]{OEIS}. 
\end{conjecture}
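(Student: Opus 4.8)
The plan is to study the fine structure of flattened Fubini rankings whose run count is near-maximal. First I would record the rigidity that having many runs forces. By the argument of \Cref{runOne}, adapted to runs of strict ascents (a mid-sequence singleton run would force a descent to a strictly smaller leading term, contradicting flatness), every run of an element of $\flatruns_k(\FR_n)$ has length at least two except possibly the last; hence such a ranking has at most $\lceil n/2\rceil$ runs, and for $n=2j+1$ the target $k=j-1$ is only two below this maximum $j+1$. This pins the total ``excess'' of elements over the length-two baseline to a bounded amount, so the admissible run-length profiles form a controlled family. Overlaying the content characterization of \Cref{sscontent} (a reduced content $(a_1,\dots,a_m)$ is realizable exactly when $a_i\le i$) then constrains which distinct values each run may reuse. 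The first goal is to reorganize $s(2j+1,j-1)$ as a sum, indexed by an integer statistic $i$, over structurally distinct ``skeletons,'' each weighted by a power of two.

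The heart of the argument is to identify that statistic and to explain the weight $2^{i}$. I note that the power-of-two phenomenon already governs the single-run case: by \Cref{lem:Fib} a weakly increasing (one-run) Fubini ranking of length $n$ is exactly a composition of $n$, of which there are $2^{n-1}$, the count of binary cut/no-cut choices at the $n-1$ internal gaps. I would therefore try to show that, once a skeleton is fixed, the remaining freedom in assembling the ranking is precisely a collection of $i$ independent binary choices of this ``cut'' type, producing the factor $2^{i}$. Concretely, running the value-by-value insertion from the forward direction of \Cref{sscontent} (insert all copies of the smallest remaining value, then the next, and so on), certain stages admit exactly two flatness-preserving placements; the number of such free stages should be the statistic $i$, and the residual count of insertion skeletons with $i$ free stages should equal $E_2(2j+1,i)$. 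The most promising way to establish this last equality is a bijection to Stirling-permutation-type objects enumerated by a descent statistic, in the spirit of the flattened Stirling permutation correspondences of \cite{flat_sp}; summing over $0\le i\le j-1$ would then reproduce the right-hand side.

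As a complementary and more mechanical route, I would match recurrences in $j$. Passing from $j$ to $j+1$ adds two elements and one run, which ought to induce a transfer operation on run-profiles and hence a linear recurrence for $a_j:=s(2j+1,j-1)$. One would then verify that $b_j:=\sum_{i=0}^{j-1}E_2(2j+1,i)2^{i}$ obeys the same recurrence by feeding in the defining second-order Eulerian recurrence for the \seqnum{A340556} triangle, checking the base cases $a_2=b_2=1$ and $a_3=b_3=162$ against \Cref{tab:strongly flattened Fubini with k runs}. This avoids constructing an explicit bijection at the price of more delicate bookkeeping.

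The main obstacle, common to both routes, is that the right-hand side is only a \emph{partial} weighted sum of row $2j+1$: it stops at $i=j-1$ and discards the terms $i\ge j$. Partial sums of second-order Eulerian numbers do not themselves satisfy the clean triangle recurrence, so the real content is a structural reason why the insertion stages indexed by $i\ge j$ simply cannot occur for this $(n,k)$ pair, i.e.\ why the truncation lands exactly at $j-1$. I expect isolating this cutoff, rather than counting a single skeleton, to be where the difficulty concentrates, which is consistent with the pattern being visible numerically yet resistant to proof. A secondary complication is that \Cref{genfunc}, the only available handle on the full statistic $s_n$, is itself conjectural, so any generating-function refinement that tracks the run count must be developed from scratch rather than specialized from a proven identity.
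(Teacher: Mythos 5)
There is no proof in the paper to compare your proposal against: the statement you were given is \Cref{conj:2}, which the authors explicitly leave open (``It is an open problem to prove \Cref{conj:2} and to establish formulas for $s(n,k)$ for general values of $n,k\geq 1$''). So the only question is whether your argument closes the conjecture, and it does not. Your preliminary observations are sound: the adaptation of \Cref{runOne} to runs of strict ascents is valid (a mid-sequence singleton run would force $w_i\geq w_{i+1}$ while flatness forces $w_i<w_{i+1}$), so at most $\lceil n/2\rceil=j+1$ runs are possible for $n=2j+1$, the run-length profiles for $k=j-1$ carry a total excess of only $3$ over the all-length-two baseline, and the base cases $s(5,1)=1$, $s(7,2)=162$ do match \Cref{tab:strongly flattened Fubini with k runs}. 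But everything that would actually prove the identity is asserted rather than established.

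Concretely, there are two unfilled gaps, one per route. For the bijective route, the heart of your plan is the claim that once a ``skeleton'' is fixed, the remaining freedom consists of exactly $i$ independent binary placement choices, and that skeletons with $i$ free stages are equinumerous with Stirling permutations of order $2j+1$ having $i$ descents; no map is defined, no statistic on the insertion process is shown to be well defined (the number of two-choice stages could a priori depend on the order of insertions, not just on the final ranking), and nothing ties it to the descent statistic behind $E_2(2j+1,i)$. The analogy with \Cref{lem:Fib} is suggestive but is only an analogy: the $2^{n-1}$ there counts one-run rankings, not a per-stage binary freedom inside a many-run ranking. For the recurrence route, neither recurrence is derived: you do not exhibit the transfer operation on run-profiles taking $j\to j+1$ (note that adding ``two elements and one run'' changes $k$ by one while $j-1$ changes by one, but the content constraints of \Cref{sscontent} do not obviously commute with any such local operation), and, as you yourself concede, the truncated weighted row sums $\sum_{i=0}^{j-1}E_2(2j+1,i)2^{i}$ do not inherit the second-order Eulerian triangle recurrence, so verifying that the right-hand side satisfies a putative recurrence is not ``delicate bookkeeping'' but precisely the open difficulty. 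Your diagnosis that the truncation at $i=j-1$ is where the problem concentrates, and that \Cref{genfunc} cannot be used as an ingredient because it is itself conjectural, is accurate --- but a correct identification of the obstruction is not a resolution of it, and the conjecture remains open after your argument.
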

The sequence $(s(2j+1,j-1))_{j\geq 1}$ 
defined in \Cref{eq:strong flat with nk}
agrees with OEIS sequence \cite[\seqnum{A112487}]{OEIS}.
We recall that the second-order Eulerian number $E_2(n, k)$ is the number of Stirling permutations of order $n$ with exactly $k$ descents, and the Stirling permutations of order $n$ are permutations of the multiset $\{1,1,2,2,3,3,\ldots,n,n\}$ such that the numbers appearing between any pair of values $i$, must be strictly larger than $i$.
Flattened Stirling permutations were studied in \cite{flat_sp} and shown to be in bijection with type $B$ set partitions. 
It is an open problem to prove \Cref{conj:2} and to establish formulas for $s(n,k)$ for general values of $n,k\geq 1$.

% Unfortunately, the sequences appearing in our data do not appear in the OEIS, and conjecture formulas remain elusive.

\section{Mixed flattened Fubini rankings}\label{sec:mixed_conditions}
In this section, we consider the sets 
$\wflatruns(\FR_n)$, the set of weakly flattened Fubini rankings with runs of ascents, and $\flatwruns(\FR_n)$, the set of flattened Fubini rankings with runs of weak ascents.
% % These Fubini rankings are of two types denoted $(X,Y)$, where $X$ specifies the condition on the runs (strictly increasing, or weakly increasing), where and $Y$ specifies the condition on the leading terms, which we denote by LT, (strictly increasing, or weakly increasing):
% \begin{enumerate}
%     \item Weakly flattened with runs of strict ascents (WFSA): These are Fubini ranking whose
%     runs have strict ascents and whose leading terms are in weakly increasing order.
%     \item Strictly flattened with runs of weak ascents (SFWA): These are Fubini ranking whose runs have weak ascents and whose leading terms are in strictly increasing order.
% \end{enumerate}

% \tr{Which of the above results can we bring here, arguments the same?}
\subsection{Weakly flattened with runs of ascents}
We begin with the following result which is evocative of Theorem~\ref{wwenum}, where we counted weakly flattened Fubini rankings with runs of weak ascents and a specified reduced content.

\begin{theorem}
\label{wsenum}
    Let $(a_1,a_2,\ldots,a_k)$, a composition of $n$ containing positive entries, be the reduced content of a Fubini ranking. 
    Then the number of weakly flattened Fubini rankings with runs of ascents having reduced content $(a_1,a_2,\ldots,a_k)$ is given by \footnotesize{$$\sum_{j_2=0}^{a_2}\left (\binom{a_1-1}{a_2-j_2}\sum_{j_3=0}^{a_3}\left (\binom{a_1+j_2-1}{a_3-j_3}\sum_{j_4=0}^{a_4}\left (\ldots \sum_{j_k=0}^{a_k}\left (\binom{a_1+j_2+j_3+\cdots+j_{k-1}-1}{a_k-j_k}\right )\ldots\right )\right )\right ).$$}
\end{theorem}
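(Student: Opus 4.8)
The plan is to adapt the constructive enumeration in the proof of Theorem~\ref{wwenum} to the strict-ascent setting. Writing the distinct values that occur as $v_1=1<v_2=1+a_1<\cdots<v_k$, with $v_i$ appearing $a_i$ times, I would build a ranking in $\wflatruns(\FR_n)$ of reduced content $\textbf{a}$ by inserting the values in increasing order. After placing the $a_1$ copies of $1$ (necessarily as $1\,1\cdots1$, which forms $a_1$ singleton runs), at the $i$th stage I choose a number $0\le j_i\le a_i$ of copies of $v_i$ to append as a block at the right end of the word constructed so far, and insert the remaining $a_i-j_i$ copies into interior positions. Summing over the $j_i$ and multiplying the independent choices made at each stage will give the nested sum in the statement.

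The content of each binomial is where the strict condition enters. As in Theorem~\ref{wwenum}, at stage $i$ the admissible interior positions into which a copy of $v_i$ may be inserted while preserving both the run structure and the weakly flattened property number exactly $a_1+j_2+\cdots+j_{i-1}-1$. The essential difference is that, because runs are now strict ascents, two equal copies of $v_i$ can never lie in a common run; consequently a second copy placed in the same interior position would open a new run whose leading term $v_i$ precedes a later run led by the smaller, already-placed value to its right, violating the weakly flattened condition. Hence the $a_i-j_i$ interior copies must occupy \emph{distinct} admissible positions, and the count of ways becomes $\binom{a_1+j_2+\cdots+j_{i-1}-1}{a_i-j_i}$, in place of the balls-in-urns quantity $\binom{a_1+j_2+\cdots+j_{i-1}+a_i-j_i-2}{a_i-j_i}$ that appears in Theorem~\ref{wwenum}. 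Forming the product over $i=2,\ldots,k$ and summing over $j_2,\ldots,j_k$ then yields the claimed formula, and I would record the boundary behavior (for example, $\binom{a_1-1}{a_2-j_2}$ forces $j_2=a_2$ when $a_1=1$, i.e.\ all copies of $v_2$ go to the tail).

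I expect the main obstacle to be making the interior-position analysis and its bijective content precise. Two points must be checked: that each allowed choice of $(j_2,\ldots,j_k)$ and of interior positions produces a genuine weakly flattened Fubini ranking with runs of ascents and reduced content $\textbf{a}$, and, conversely, that every such ranking arises from exactly one such choice (recovered by reading off, value by value, the size of the trailing block and the locations of the remaining copies). The subtlety is that the admissibility of an individual interior gap can depend on the entries that follow it -- for content $(2,1,1)$, for instance, the analysis must reject $1\,4\,3\,1$ while retaining both $1\,4\,1\,3$ and $1\,1\,4\,3$ -- even though the total number of admissible interior positions collapses to the clean expression $a_1+j_2+\cdots+j_{i-1}-1$. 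Establishing this count in the strict setting, where the trailing block splits into several singleton runs rather than merging into one as in the weak case, and verifying that the resulting map is a bijection, is where the bulk of the argument lies; I would cross-check the final formula on reduced content $(2,1,1)$, where it evaluates to $5$, against direct enumeration.
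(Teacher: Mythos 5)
Your proposal follows essentially the same route as the paper's own proof of Theorem~\ref{wsenum}: insert the values in increasing order, choose $j_i$ copies of $v_i$ to append at the tail, and place the remaining $a_i-j_i$ copies in \emph{distinct} admissible interior positions, of which there are exactly $a_1+j_2+\cdots+j_{i-1}-1$ because strict ascents forbid two equal values in one run, yielding the same nested binomial sum. The subtleties you flag (admissibility of a gap depending on the entries to its right, and at most one copy per gap) are exactly the points the paper's construction handles, and your sanity check of $5$ for reduced content $(2,1,1)$ is correct, so your plan is sound and matches the published argument.
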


\begin{proof}
%\pamela{edit later.}\jennifer{I did a pass at this one, Pamela, can you proof read it?}
    As in the proof of Theorem~\ref{wwenum}, we construct a weakly flattened Fubini ranking by placing the $a_1$ entries of value $1$ first (and later the larger values). To start we place all of the $a_1$ values of $1$, which can only be done one way. 
    Notice that since we are considering runs of ascents, each $1$ begins its own unique run and, since we are considering weakly flattened Fubini rankings, the sequence of $1$s will satisfy the weak flat conditions.
    
    Next, we place the $a_2$ entries with value $a_1+1$ in such a way that $j_2$ of them come after the final~$1$. Since we want $a_2 - j_2$ entries of value $a_1+1$ to come before the final $1$, but in order for the sequence to be flattened they must come after the first $1$. Furthermore, if we place two elements of value $a_1+1$ between the same two $1$s, the second will start its own run, which we cannot allow. Thus, at most one element of value $a_1+1$ may be allowed between any two consecutive $1$s. Since we have $a_1$ elements of value $1$, there are $a_1-1$ positions where we might place an element of value $a_1+1$ between them. Thus, there are $\binom{a_1-1}{a_2-j_2}$ ways to place the $a_2-j_2$ elements of value $a_1+1$ among the $1$s.

Continuing iteratively, if the reduced content contains three or more entries, we place the $a_3$ copies of the next value $(a_1+a_2+1)$ so that $a_3-j_3$ of them come before the final entry of lesser value (this way we are building longer runs and not new runs). If an entry is placed between two entries of value $1$, then it must be placed to continue the run between those entries, otherwise the Fubini ranking will not be flattened. Furthermore, only one element of the next value may be placed between any two elements of lesser value, since the second value placed would begin a new run (as the runs are of ascents and any value repeated twice appearing consecutively would begin a new run). However, entries can be placed anywhere between the $j_2$ entries with value $a_1+1$ that were placed after the final entry with value $1$. This leaves a total number of $a_1+j_2-1$ positions in which the $a_3-j_3$ elements can be placed, which can be done $\binom{a_1+j_2-1}{a_3-j_3}$ in different ways.

This continues until it is time to place the final $a_k$ entries with the value $a_1+a_2+\ldots +a_{k-1}+1$. We place $j_k$ of them at the very end of the sequence and $a_k-j_k$ in the middle of the existing sequence. These $a_k-j_k$ entries can be placed anywhere between the $j_{k-1}$ entries of the next to largest value at the end of the sequence as long as only one is placed between two consecutive elements. However, if they are placed between the $a_1$ $1$s or the $j_2$ entries with value $a_1+1$ after the $1$s or the $j_3$ entries of the third value after the $a_1+1$s, or so forth, then they must be placed so as to extend the existing run and, again, only one such element can be added to each run. Thus, the values can be placed into the existing sequence in $\binom{a_1+j_2+j_3+\ldots+j_{k-1}-1}{a_k-j_k}$ ways.

Since at each step we are making independent choices, the number of ways to construct these Fubini rankings is the product of the number of choices at each step. However, we need to sum over all possible allowable values for $j_2, j_3,\ldots j_k$. This is exactly the stated formula in the result.
\end{proof}

The following two results give bounds on the number of runs appearing in a weakly flattened Fubini ranking with runs of ascents and a specified reduced content. 
% Recall that the content of a Fubini word is a vector of non-negative integers where the first entry is the number of $1$s in the Fubini word, the second entry is the nuber of $2$s in the Fubini word, and so forth. The reduced content of a Fubini word is the content of the word with any entries of value $0$ omitted.

\begin{lemma}
Consider the subset of \emph{$\wflatruns_k(\FR_n;\textbf{a})\subseteq\wflatruns_k(\FR_n)$} of weakly flattened Fubini rankings with $n$ competitors, $k$ runs of ascents, and content ${\bf a} = (a_1,a_2,\ldots,a_n)$. 
If the set \emph{$\wflatruns_k(\FR_n;\textbf{a})$} is nonempty, then $\max\emph{\textbf{a}}\leq k$.
%be the reduced content of a weakly flattened Fubini ranking with runs of ascents. Then the minimum possible number of runs of ascents is the maximum entry in the content.
\end{lemma}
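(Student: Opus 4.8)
The plan is to prove this by a direct counting argument that uses only the fact that the runs are runs of \emph{strict} ascents; the weak-flattening hypothesis will turn out to be irrelevant to the bound. First I would invoke the nonemptiness assumption to fix some $f\in\wflatruns_k(\FR_n;\textbf{a})$, and let $v$ be a value achieving the maximum multiplicity, so that $v$ occurs exactly $\max\textbf{a}$ times among the entries of $f$. The goal is then to show that these occurrences cannot be too crowded, forcing $k$ to be at least $\max\textbf{a}$.

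The key observation I would isolate is that, since each run is a maximal contiguous subword of strict ascents, the entries within any single run are strictly increasing and hence pairwise distinct. In particular, no value can appear more than once inside a single run. Applying this to $v$, I conclude that the $\max\textbf{a}$ copies of $v$ must be distributed among $\max\textbf{a}$ \emph{distinct} runs of $f$. Since $f$ has exactly $k$ runs in total, the number of runs that contain a copy of $v$ is at most $k$, and therefore $\max\textbf{a}\le k$.

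There is no real obstacle here: the entire argument rests on the single definitional fact that a run of ascents contains each value at most once, which is immediate from $w_i<w_{i+1}$ within a run. The only point that deserves a sentence of care is the translation between the content vector and multiplicities — namely that $\max\textbf{a}$ is precisely the largest number of times any single rank is repeated in $f$ — after which the pigeonhole step completes the proof. I would also remark, for the reader's benefit, that this bound holds verbatim for $\flatruns_k(\FR_n;\textbf{a})$ as well, since only the strictness of the ascents (not the strictness of the flattening) is used.
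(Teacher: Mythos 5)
Your proof is correct and is essentially the paper's own argument: both rest on the single observation that a run of strict ascents contains each value at most once, so the $\max\textbf{a}$ copies of the most-repeated value must lie in distinct runs, forcing $\max\textbf{a}\leq k$ by pigeonhole. The paper's proof goes on to construct a ranking with content $\textbf{a}$ attaining exactly $\max\textbf{a}$ runs, but that achievability claim is extra content beyond what the stated lemma requires, so its absence from your write-up is not a gap.
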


\begin{proof}
    Since each run is made up of consecutive ascents, no two elements with the same value can be in the same run. 
    This immediately makes the maximal entry in the content a lower bound for the number of runs the Fubini ranking can contain.

    To see that you can achieve this lower bound, we can construct a Fubini ranking $f$ with content $\textbf{a}=(a_1,a_2,\ldots,a_n)$ and $\max{\textbf{a}}$ runs. 
    We begin by first constructing the initial run of $f$ by placing one element of each positive value in $\textbf{a}$, arranged in increasing order. 
    Then, we update the content to be $\textbf{a}'=(\max(0,a_1-1),\max(a_2-1,0),\ldots,\max(a_n-1,0)$, where we subtract $1$ from each nonzero entry in the content $\textbf{a}$. 
    We build the next run of $f$ by placing one element of each positive value in $\textbf{a}'$, arranged in increasing order. 
    Continue in this way, updating the content and constructing a new run of $f$. 
    The result of this iterative process ensures that $f$ has $\max\textbf{a}$ runs, at which point we have used all of the elements in the original content $\textbf{a}$.
\end{proof}

%No longer believe
% \begin{lemma}
% Consider the subset of \emph{$\wflatruns(\FR_n;\textbf{a})\subseteq\wflatruns(\FR_n)$} of weakly flattened Fubini rankings with $n$ competitors, runs of ascents, and reduced content ${\bf a} = (a_1,a_2,\ldots,a_k)$. 
% If \emph{$f\in\wflatruns(\FR_n;\textbf{a})$}, then $f$ has at most $n-k+1$ runs of ascents.
%     %Let ${\bf a} = (a_1,a_2,\ldots,a_k)$ be a composition of $n$ and be the reduced content of a Fubini ranking. Then the maximum possible number of runs of ascents in this word is $n-k+1$.
% \end{lemma}
% % (3,1,2)
% % n-k+1=6-3+1=4
% % 111455
% % 14|15|15

% % 1|1|145|5

% % content=1111
% % 1234

% \begin{proof}
%     Arrange the elements of the Fubini word in weakly increasing order. Since the runs are required to contain strictly increasing sub-sequences, each time a duplicate of a previous entry reappears, it begins a new run. Thus, the only elements that do not begin their own run are the first appearance of each new element, because they will be greater than the element to their left. Of course, the first $1$ will begin its own run, the very first run, so the total number of runs will be $n-k+1$.

%     To see that this must be the most possible runs, consider that for any element of value greater than $1$, the first occurrence of that element could not possible begin a new run. Therefore the most elements that could start a new run is $n-k+1$, and this is an upper bound on the number of runs that is possible.
% \end{proof}

\Cref{tab:WFSR} provides data on the number of weakly flattened Fubini rankings with $n$ competitors and $k$ runs of ascents, which we denote by $\wflatruns_k(\FR_n)$. We remark that it is an open problem to determine formulas for $\wflatwruns_k(\FR_n)$. 
However, the row sums are given by the formula in \Cref{wsenum}.

\begin{table}[h]
\centering
\begin{tabular}{|c||c|c|c|c|c|c|c|}
\hline
$n\backslash k$ & 1 & 2 & 3 & 4 & 5 & 6 & 7 \\
\hline\hline
1 & 1 & 0 & 0 & 0 & 0 & 0 & 0 \\
\hline
2 & 1 & 1 & 0 & 0 & 0 & 0 & 0 \\
\hline
3 & 1 & 4 & 1 & 0 & 0 & 0 & 0 \\
\hline
4 & 1 & 13 & 7 & 1 & 0 & 0 & 0 \\
\hline
5 & 1 & 36 & 50 & 10 & 1 & 0 & 0 \\
\hline
6 & 1 & 93 & 286 & 112 & 13 & 1 & 0 \\
\hline
7 & 1 & 232 & 1419 & 1082 & 199 & 16 & 1 \\
\hline
\end{tabular}
\caption{Weakly flattened Fubini rankings with $k$ runs of ascents.}

\label{tab:WFSR}
\end{table}

\subsection{Flattened with runs of weak ascents}

Recall that \Cref{runOne}, \Cref{tooManyRuns}, and \Cref{thm:wfFub} all hold for flattened with runs of weak ascents. In addition, \Cref{lem:valid content} holds in the case of flattened with runs of weak ascents which motivates the following.

\begin{lemma}
\label{singlerun}
Consider the subset of \emph{$\flatwruns(\FR_n;\textbf{a})\subseteq\flatwruns(\FR_n)$} of flattened Fubini rankings with $n$ competitors,  runs of weak ascents, and content ${\bf a} = (a_1,a_2,\ldots,a_n)$. 
Then \emph{$|\flatwruns_1(\FR_n;\textbf{a})|=1$}.
% If %the set \emph{$\flatwruns(\FR_n;\textbf{a})$} is nonempty, then there exists
%\emph{$f\in\flatwruns_1(\FR_n;\textbf{a})$}   with a single run of weak ascents.
    %Given any content there is a flattened Fubini ranking with $n$ competitions and a single run of weak ascent.
\end{lemma}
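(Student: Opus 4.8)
The plan is to recognize that a Fubini ranking with exactly one run of weak ascents is simply a weakly increasing tuple, and that such a tuple is completely determined by its content. First I would note that a single run of weak ascents means $w_1\le w_2\le\cdots\le w_n$, so the tuple is weakly increasing; conversely, any weakly increasing tuple forms a single run of weak ascents. Hence $\flatwruns_1(\FR_n;\textbf{a})$ is precisely the set of weakly increasing Fubini rankings with content $\textbf{a}$. Because there is only one run, there is a single leading term, so the flattened condition (that the leading terms of the runs strictly increase) holds vacuously; this is why we may work with $\flatwruns$ rather than $\wflatwruns$ here without any additional constraint.

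Next I would prove existence and uniqueness of such a tuple. For uniqueness, the content $\textbf{a}$ fixes the underlying multiset of values (the value $i$ occurs $a_i$ times), and a given multiset has exactly one weakly increasing arrangement; this shows $|\flatwruns_1(\FR_n;\textbf{a})|\le 1$. For existence, I would use that whether a tuple is a Fubini ranking depends only on its content and not on the order of its entries: by \Cref{lem:valid reduced contents}, the content of a Fubini ranking is exactly a vector in which each nonzero entry $a_i$ is followed by $a_i-1$ zeros, a condition that is manifestly invariant under reordering the tuple. Since $\textbf{a}$ is assumed to be the content of a Fubini ranking, the (unique) weakly increasing rearrangement of its multiset is again a Fubini ranking; indeed it is exactly the ranking built in the proof of \Cref{lem:valid reduced contents}. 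Combining the two bounds gives $|\flatwruns_1(\FR_n;\textbf{a})|=1$.

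The only real subtlety is the existence step, namely justifying that reordering a Fubini ranking preserves the Fubini property. I expect to handle this by appealing directly to the content characterization already established in \Cref{lem:valid reduced contents}, after which the two remaining observations — that a single run of weak ascents is the same as being weakly increasing, and that a multiset has a unique weakly increasing arrangement — are immediate.
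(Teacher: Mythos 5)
Your proposal is correct and follows essentially the same route as the paper's proof: a single run of weak ascents forces the entries to appear in weakly increasing order, and that arrangement is uniquely determined by the content $\textbf{a}$. The only difference is that you make explicit the existence step (that the weakly increasing rearrangement is still a Fubini ranking, since the Fubini property depends only on the content, realized by the construction in \Cref{lem:valid reduced contents}), a point the paper's proof passes over silently.
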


\begin{proof} This result is equivalent showing that there exists a unique flattened Fubini ranking with $n$ competitors, runs of weak ascents, having content $\textbf{a}$, with exactly one run.
    Since runs only need to ascend weakly, we must place all the elements of the Fubini ranking, specified by the content $\textbf{a}$, in weakly increasing order to create a single run. Since this is unique for each content ${\bf a}$, $|\flatwruns_1(\FR_n;\textbf{a})|=1$ as desired. 
\end{proof}

%In addition, \Cref{tooManyRuns} holds for flattened with runs of weak ascents as well.

% \begin{corollary}
% \label{tooManyRuns2}
%     Let $n\geq 1$. There exists a weakly flattened Fubini ranking of length $n$ with runs of weak ascents 
% \end{corollary}
% \begin{proof}
% The result follows from the proof of \Cref{tooManyRuns}, as our argument did not rely on being weakly flattened at any point.
% % Clearly if $n=1$ the entire Fubini ranking is a run with a single entry. Assuming $n>1$ the following arguments show both directions of the corollary statement.
% %     ($\Rightarrow$) By \Cref{runOne} any run with a single entry is the final run of the flattened Fubini ranking, so there has to be at least two numbers. However, if there are exactly two numbers then the Fubini ranking would consist of a descent, which would imply that it is not flattened.  
% % ($\Leftarrow$)    For $n>2$ it is possible to construct a Fubini ranking that contains a run with a single entry in the following way. Consider the permutation $1,3,4,\ldots,n,2$. Since it is a permutation, it is a Fubini ranking. Clearly it has two runs where the first begins with the element $1$ and the second with element $2$, so it is flattened and contains a run with a single element.
% \end{proof}

\begin{lemma}\label{lem:max_runs_flatwruns}
Consider the subset of \emph{$\flatwruns(\FR_n;\textbf{a})\subseteq\flatwruns(\FR_n)$} of flattened Fubini rankings with $n$ competitors, runs of weak ascents, and reduced content ${\bf a} = (a_1,a_2,\ldots,a_m)$. 
If \emph{$\flatwruns_k(\FR_n;\textbf{a})$} is nonempty, then $k<\max(m,2)$.
%be the reduced content of a weakly flattened Fubini ranking with runs of ascents. Then the minimum possible number of runs of ascents is the maximum entry in the content.
\end{lemma}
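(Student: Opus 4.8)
The plan is to bound $k$ by counting how many of the $m$ distinct values appearing in the ranking can serve as the leading term of a run. Since the ranking is flattened, its $k$ leading terms are \emph{strictly} increasing, hence pairwise distinct, and each is one of the $m$ values recorded by the reduced content $\textbf{a}$. This already gives $k \le m$; the point is to sharpen it to $k \le m-1$ whenever $m \ge 2$, which together with the case $m = 1$ yields $k < \max(m,2)$.

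First I would dispose of the case $m = 1$. Here only one rank occurs, and since every Fubini ranking attains rank $1$, that rank must be $1$ and the ranking equals $(1,1,\ldots,1)$, a single run of weak ascents. Hence $k = 1 < 2 = \max(1,2)$.

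The heart of the argument, and the step I expect to require the most care, is to show that when $m \ge 2$ the largest value occurring in the ranking can never be the leading term of a run. Write the ranking as $x = (x_1,x_2,\ldots,x_n)$ and let $v_1 < v_2 < \cdots < v_m$ be its distinct values (so $v_i$ has multiplicity $a_i$), with $v_m$ the maximum. For runs of weak ascents, a new run begins at a position $i > 1$ exactly when $x_{i-1} > x_i$, that is, at a strict descent. So if $v_m$ were the leading term of a run other than the first, the entry immediately before it would be strictly larger than $v_m$, which is impossible. If instead $v_m$ were the leading term of the first run, then by strict monotonicity of the leading terms it would be the only leading term, so $x$ would be a single run of weak ascents starting at $v_m$; but a weak-ascent run beginning at the maximal value consists entirely of copies of $v_m$, forcing $m = 1$ and contradicting $m \ge 2$.

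Granting this claim, the $k$ distinct leading terms all lie in $\{v_1,\ldots,v_{m-1}\}$, a set of size $m-1$, so $k \le m-1 < m = \max(m,2)$. Combining the two cases shows $k < \max(m,2)$ whenever $\flatwruns_k(\FR_n;\textbf{a})$ is nonempty, as desired. The only genuinely delicate point is the first-run subcase above; the rest is bookkeeping with the strict-descent characterization of run boundaries, and notably the argument never uses that the ranking is a Fubini ranking beyond the fact that rank $1$ is attained, so it applies verbatim to any flattened word with runs of weak ascents.
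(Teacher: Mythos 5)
Your proof is correct and takes essentially the same route as the paper's: treat $m=1$ separately (all-ones ranking, one run), then note that the largest value cannot be the leading term of any run of weak ascents, so the $k$ distinct, strictly increasing leading terms lie among the remaining $m-1$ values, giving $k\le m-1<\max(m,2)$. Your careful handling of the first-run subcase merely fills in a detail the paper leaves implicit in its parenthetical ``since the runs are made up of weak ascents.''
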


\begin{proof}
    Since the Fubini ranking is flattened, the leading terms of the runs must all be distinct and appear in increasing order.
If $m=1$, then the Fubini ranking is the all ones tuple, which has exactly one run and so $k=1< \max(m,2)=\max(1,2)=2$.
Now consider $m>1$.    
Since there are $m$ distinct values in the content $\textbf{a}$, and the largest value of $\textbf{a}$ cannot start a run (since the runs are made up of weak ascents), we must have that $k< \max(2,m)$.
    \end{proof}

Note that we have not guaranteed that a flattened Fubini ranking with runs of weak ascents with the maximum possible number of runs will exist. So the bound in \Cref{lem:max_runs_flatwruns} may not be strict.

We conclude with \Cref{tab:SFWR} where we provide data on the number of elements in $\flatwruns_k(\FR_n)$, which is the set of flattened Fubini rankings with $n$ competitors and $k$ runs of weak ascents. We remark that it is an open problem to determine formulas for $|\flatwruns_k(\FR_n)|$, and for the row sums in the table which gives the number of elements in $\flatwruns(\FR_n)$, which is the set of flattened Fubini rankings with $n$ competitors.

\begin{table}[h]
\centering
\begin{tabular}{|c||c|c|c|c|}
\hline
$n\backslash k$ & 1 & 2 & 3 & 4 \\
\hline\hline
1 & 1 & 0 & 0 & 0 \\
\hline
2 & 2 & 0 & 0 & 0 \\
\hline
3 & 4 & 1 & 0 & 0 \\
\hline
4 & 8 & 9 & 0 & 0 \\
\hline
5 & 16 & 51 & 5 & 0 \\
\hline
6 & 32 & 235 & 86 & 0 \\
\hline
7 & 64 & 967 & 871 & 41 \\
\hline
\end{tabular}
\caption{Flattened Fubini rankings with $n$ competitors and $k$ runs of weak ascents.}
\label{tab:SFWR}
\end{table}

% \section{Flattened Fubini rankings with fixed content}

%\tr{need to move results from main into this section}

% \begin{lemma}
%     Given any content there is a flat Fubini ranking with a single run.
% \end{lemma}

% The argument is exactly the same as the proof of Lemma~\ref{singlerun} and follows from Lemma~\ref{lem:valid content}.

\section{Future work}\label{sec:future}
% List the open problems and any other generalizations. Unit Fubini...

We conclude with the following directions for future work. First, we recall open problems that were included in previous sections.

%From the work done in this paper, we have a list of open problems we would still like to answer:
\begin{enumerate}
\item Proposition~\ref{prop:end_in_1} relied on the set $\mathcal{B}_k(\bf{a})$of $n\times (k-1)$ nonnegative integer valued matrices whose $i$th row is a weak composition of $a_i$ and whose column sums are always greater than or equal to $1$. It is still an open question to determine the number of such matrices for all $n$ and $k$. 
%Find an enumerative count for $|\mathcal{B}_k(\bf{a})|$, the size of the set of $n\times (k-1)$ nonnegative integer valued matrices whose $i$th row is a weak composition of $a_i$ and whose column sums are always greater than or equal to $1$. 
\item 
Conjecture~\ref{genfunc} states the following:
Let 
$s_n=|\flatruns(\FR_n)|$ denote the number
of flattened Fubini rankings of length $n$ with runs of ascents. Then 
the generating function for $s_n$, is given by 
\[\sum_{n\geq 0}s_nx^n=\sum_{n\geq 0} \frac{1} { 2^{n+1}} \left( \prod_{i=1}^{n}( 1 + x(1+x)^i )\right).\]
This implies that the number of flattened Fubini rankings with runs of ascents appears to agree with \cite[\seqnum{A338793}]{OEIS},
for which no combinatorial description is provided. 
Thus, a proof to \Cref{genfunc} would yield a combinatorial interpretation for this sequence.
% Conjecture~\ref{genfunc}:
% The generating function for $s_n=|S(\FR_n)|$, is given by 
% \[\sum_{n\geq 0}s_nx^n=\sum_{n\geq 0} \frac{1} { 2^{n+1}} \left( \prod_{k=1}^{n}( 1 + x(1+x)^k )\right).\]
    \item Conjecture~\ref{conj:2} states the following: Let $s(n,k)=|\flatruns_k(\FR_n)|$ be the number of flattened Fubini rankings on $n$ competitors with $k$ runs of ascents. Then
       
\begin{align*}\label{eq:strong flat with nk}
s(2j+1,j-1)=\sum_{i=0}^{j-1} E_2(2j+1,i)2^{i},\end{align*}
    where $E_2(a, b)$ are the second-order Eulerian numbers \cite[\seqnum{A340556}]{OEIS}. 

% The sequence $(s(2j+1,j+1))_{j\geq 1}$ 
% defined in \Cref{eq:strong flat with nk}
% agrees with \cite[\seqnum{A112487}]{OEIS}.
% We recall that the second-order Eulerian number $E_2(n, k)$ is the number of Stirling permutations of order $n$ with exactly $k$ descents, and the Stirling permutations of order $n$ are permutations of the multiset $\{1,1,2,2,3,3,\ldots,n,n\}$ such that the numbers appearing between any pair of values $i$, must be strictly larger than $i$.
% Flattened Stirling permutations were studied in \cite{flat_sp} and shown to be in bijection with type $B$ set partitions. 
% It is an open problem to prove \Cref{conj:2} and to establish formulas for $s(n,k)$ for general values of $n,k\geq 1$.

%     Let $s(n,k)$ be the number of Fubini rankings on $n$ competitors with $k$ runs. Then

% \begin{align}
% s(2j+1,j+1)=\sum_{k=0}^{n} E_2(n,k)2^k,\end{align}
%     where $E_2(n, k)$ are the second-order Eulerian numbers. 

    \item Tables~\ref{tab:weakly flattened Fubini with k runs},~\ref{tab:WFSR}, and~\ref{tab:SFWR} all have open enumeration questions that would be of interest, though there are no formal conjectures as to specifically desired formulas.
\end{enumerate}

Other directions and questions we have not discussed previously that we plan to pursue in future work:

%\kenny{I'm just making a list of open problems so I don't forget them. We'll probably want to format it more nicely.}

\begin{enumerate}

    \item[(5)] Determining a strict upper bound for the number of runs in a strictly flattened Fubini ranking with runs of weak ascents and having a given content.
    \item[(6)] Determining strict upper and lower bounds for the number of runs in a flattened Fubini ranking with runs of ascents and having a given content.
    \item[(7)] Fix a content that leads to a flattened Fubini ranking with runs of ascents. How many rearrangements of the entries will yield another type of flattened Fubini ranking? And does this problem become easier if we enumerate these by the number of runs?
    \item[(8)] How many Fubini rankings with exactly $j$ ones exist under the various flattened definitions?
        \item[(9)] We could relax the flattened condition and let $\run_k(\FR_n)$ be the set of Fubini rankings with $k$ runs of ascents, and $\wrun_k(\FR_n)$ be the set of Fubini rankings with $k$ runs of weak ascents. We would like to give enumerations of these sets for any $1\leq k\leq n$.

    %Given a reduced content for a Fubini ranking with exactly $k$ ones under the various flattened conditions?
\end{enumerate}

% \pamela{Next thing to do: Given a content leads to a strong Fubini ranking, how many rearrangements yield another flattened Fubini ranking. Is it helpful to enumerate these by the number of runs?
% }

% \pamela{how many Fubini rankings are there with exactly $k$ ones?}

%\pamela{we need to update the references-I think some are now out.}

\bibliographystyle{plain}
\bibliography{bibliography}

\end{document}